\documentclass[a4paper,oneside,10pt]{article}%
\usepackage{amsmath}
\usepackage{amsfonts}
\usepackage{amssymb}
\usepackage{graphicx}
\usepackage{color}
\usepackage[square,numbers,sort&compress]{natbib}%
\providecommand{\U}[1]{\protect \rule{.1in}{.1in}}

\newtheorem{theorem}{Theorem}[section]

\newtheorem{corollary}[theorem]{Corollary}

\newtheorem{definition}[theorem]{Definition}

\newtheorem{example}[theorem]{Example}

\newtheorem{lemma}[theorem]{Lemma}

\newtheorem{proposition}[theorem]{Proposition}

\newenvironment{proof}[1][Proof]{\noindent \textbf{#1.} }{\  $\Box$}
\numberwithin{equation}{section}

\begin{document}

\title{Necessary and sufficient conditions for optimal control of coupled
forward-backward stochastic control systems driven by $G$-Brownian motion}
\author{Xiaojuan Li\thanks{Department of Mathematics, Qilu Normal University, Jinan
250200, China. lxj110055@126.com. }
\and Mingshang Hu \thanks{Zhongtai Securities Institute for Financial Studies,
Shandong University, Jinan, Shandong 250100, China. humingshang@sdu.edu.cn.
Research supported by the Taishan Scholars Project (NO. tstp202607025) and the National Natural Science Foundation of China (NO. 12326603).} }
\maketitle

\textbf{Abstract}. In this paper, we study a coupled forward-backward
stochastic control system driven by $G$-Brownian motion. Based on some
detailed estimates for control system, we obtain the variational inequality
for cost functional, which is a necessary condition for optimality.
Furthermore, we prove that this necessary condition is also a sufficient
condition under the usual convexity assumptions.

{\textbf{Key words}. }maximum principle, variational inequality,
forward-backward stochastic differential equation, $G$-expectation

\textbf{AMS subject classifications.} 93E20, 60H10, 35K15

\addcontentsline{toc}{section}{\hspace*{1.8em}Abstract}

\section{Introduction}

In order to study the issue of volatility uncertainty in finance, Peng
\cite{P07a, P08a} introduced the theory of $G$-expectation, and established
the theory of stochastic differential equation driven by $G$-Brownian motion
($G$-SDE). If $G$ is a linear function, then $G$-Brownian motion is classical
Brownian motion. Furthermore, Hu et al. \cite{HJPS1, HJPS} established the
theory of backward stochastic differential equation driven by $G$-Brownian
motion ($G$-BSDE). Another important fully nonlinear BSDE ($2$BSDE) was
established by Soner et al. \cite{STZ11}. The formulations and methods for
$G$-BSDE and $2$BSDE are different, and the latest developments can be found
in \cite{DK, FPW, HJL1, HYH, HTW, LRT, MPZ} and the references therein.

It is well known that the maximum principle is an important tool for dealing
with stochastic optimal control problems. It is a necessary condition for
optimal control to be satisfied and also a sufficient condition under the
usual convexity assumptions (see \cite{YZ}). Xu \cite{Xu} first studied the
following stochastic control system under $G$-expectation framework (for
simplicity, the state process $X$ and $G$-Brownian motion $B$ are
$1$-dimensional in this section):%
\begin{equation}
dX_{t}^{u}=b(t,X_{t}^{u},u_{t})dt+\sigma(t,X_{t}^{u},u_{t})dB_{t},\text{
}X_{0}^{u}=x_{0}\in \mathbb{R}, \label{e0-1}%
\end{equation}
where the control domain $U$ is a given nonempty convex set of $\mathbb{R}%
^{m}$, and the set of admissible controls is denoted by $\mathcal{U}[0,T]$ in
this section. The cost functional is defined as follows:%
\begin{equation}
J(u):=\mathbb{\hat{E}}\left[  \phi(X_{T}^{u})-\int_{0}^{T}f(t,X_{t}^{u}%
,u_{t})dt\right]  . \label{e0-4}%
\end{equation}
The optimal control problem is to minimize $J(u)$ over $\mathcal{U}[0,T]$. Xu
obtained the variational equation for (\ref{e0-1}), and the necessary
condition for optimal control to be satisfied by using the subadditivity of
$G$-expectation. However, this necessary condition is not a sufficient one
under the usual convexity assumptions. Furthermore, Hu and Ji \cite{HJ1}
studied the following forward-backward stochastic control system:%
\begin{equation}
\left \{
\begin{array}
[c]{rl}%
dX_{t}^{u}= & b(t,X_{t}^{u},u_{t})dt+\sigma(t,X_{t}^{u},u_{t})dB_{t},\\
dY_{t}^{u}= & f(t,X_{t}^{u},Y_{t}^{u},Z_{t}^{u},u_{t})dt+Z_{t}^{u}%
dB_{t}+dK_{t}^{u},\\
X_{0}^{u}= & x_{0}\in \mathbb{R},\text{ }Y_{T}^{u}=\phi(X_{T}^{u}),\text{ }%
t\in \lbrack0,T].
\end{array}
\right.  \label{e0-2}%
\end{equation}
The cost functional is defined by $J(u):=Y_{0}^{u}$, and the optimal control
problem is to minimize $J(u)$ over $\mathcal{U}[0,T]$. If $f(\cdot)$ does not
contain $(Y^{u},Z^{u})$ in (\ref{e0-2}), then $Y_{0}^{u}$ is equal to the
right side of (\ref{e0-4}). By introducing the linearization and weak
convergence methods, Hu and Ji obtained the derivative for $J(u)$ and the
maximum principle, which is a sufficient condition under the usual convexity
assumptions. It is worth noting that the variational equation for
$(Y^{u},Z^{u},K^{u})$ generally does not exist, which is completely different
from the classical probabilistic case.

In this paper, we consider the following coupled forward-backward stochastic
control system:
\begin{equation}
\left \{
\begin{array}
[c]{rl}%
dX_{t}^{u}= & b(t,X_{t}^{u},Y_{t}^{u},u_{t})dt+\sigma(t,X_{t}^{u},Y_{t}%
^{u},u_{t})dB_{t},\\
dY_{t}^{u}= & f(t,X_{t}^{u},Y_{t}^{u},Z_{t}^{u},u_{t})dt+Z_{t}^{u}%
dB_{t}+dK_{t}^{u},\\
X_{0}^{u}= & x_{0}\in \mathbb{R},\text{ }Y_{T}^{u}=\phi(X_{T}^{u}),\text{ }%
t\in \lbrack0,T].
\end{array}
\right.  \label{e0-3}%
\end{equation}
The cost functional is defined by $J(u):=Y_{0}^{u}$, and the optimal control
problem is to minimize $J(u)$ over $\mathcal{U}[0,T]$. When $G$ is linear, the
well-posedness of (\ref{e0-3}) can be found in \cite{MY} and the references
therein, and the corresponding maximum principle can be found in \cite{HJX,
PW, SW} and the references therein. For the nonlinear $G$, Lu and Song
\cite{LS}, and Zheng \cite{Zh} obtained that (\ref{e0-3}) has a unique $L^{2}%
$-solution for small $T$, Wang and Yuan \cite{WY} obtained that (\ref{e0-3})
has a minimal solution under the assumptions that $b(\cdot)$ is increasing in
$y$ and $\sigma(\cdot)$ does not contain $y$, and Li \cite{Li} obtained that
(\ref{e0-3}) has a unique $L^{p}$-solution with $p\geq2$ for arbitrary $T$
under weakly coupling condition. Here, we study the control system
(\ref{e0-3}) under the weakly coupling condition in \cite{Li}. In particular,
the $L^{p}$-estimates (see Theorem \ref{th1-1}) are tools for deriving the
necessary conditions for optimality.

Since $G$-SDE in \ref{e0-3} contains $Y^{u}$ and $Y^{u}$ does not have the
variational equation, $X^{u}$ also does not have the variational equation.
This point is completely different from the control system (\ref{e0-2}), which
leads to the linearization and weak convergence methods no longer being
applicable. To overcome this difficulty, we obtain detailed estimates for
$X^{\varepsilon}-X^{\ast}$, $Y^{\varepsilon}-Y^{\ast}$ and $Z^{\varepsilon
}-Z^{\ast}$ (see Lemmas \ref{le2-1} and \ref{le2-2}) by using $L^{p}%
$-estimates, and particularly, we also get better estimates for $Z^{\varepsilon}-Z^{\ast}$
under $\mathcal{P}^{\ast}$ and $\mathcal{P}_{\varepsilon}^{\ast}$ (see
(\ref{e2-4})). Based on these estimates, we obtain the variational inequality
for cost functional, which is a necessary condition for optimality.
Furthermore, we prove that this necessary condition is also a sufficient
condition under the usual convexity assumptions.

This paper is organized as follows. In Section 2, we recall some basic results
of $G$-expectation and give the formulation of our control problem. In Section
3, we obtain the variational inequality for cost functional. In Section 4, we
give the sufficient conditions for optimality.

\section{Preliminaries and formulation of the control problem}

For the sake of simplicity, we only consider $1$-dimensional $G$-Brownian
motion, and the multi-dimensional cases are treated in a similar way.

Let $T>0$ be given and let $\Omega_{T}=C_{0}[0,T]$ be the space of
$\mathbb{R}$-valued continuous functions on $[0,T]$ with $\omega_{0}=0$. The
canonical process $B_{t}(\omega):=\omega_{t}$, for $\omega \in \Omega_{T}$ and
$t\in \lbrack0,T]$. For each given $t\in \lbrack0,T]$, set%
\[
Lip(\Omega_{t}):=\{ \varphi(B_{t_{1}},B_{t_{2}}-B_{t_{1}},\ldots,B_{t_{N}%
}-B_{t_{N-1}}):N\geq1,t_{1}<\cdots<t_{N}\leq t,\varphi \in C_{b.Lip}%
(\mathbb{R}^{N})\},
\]
where $C_{b.Lip}(\mathbb{R}^{N})$ denotes the space of bounded Lipschitz
functions on $\mathbb{R}^{N}$.

For each given $0<\underline{\sigma}<\bar{\sigma}<\infty$, define%
\[
G(a)=\frac{1}{2}(\bar{\sigma}^{2}a^{+}-\underline{\sigma}^{2}a^{-})\text{ for
}a\in \mathbb{R}\text{.}%
\]
Peng \cite{P07a, P08a} constructed the $G$-expectation space $(\Omega
_{T},Lip(\Omega_{T}),\mathbb{\hat{E}},(\mathbb{\hat{E}}_{t})_{t\in \lbrack
0,T]})$ as follows: for each $0<t_{1}<\cdots<t_{N}=T$ and $\xi=\varphi
_{N}(B_{t_{1}},B_{t_{2}}-B_{t_{1}},\ldots,B_{t_{N}}-B_{t_{N-1}})\in
Lip(\Omega_{T})$, define%
\[
\mathbb{\hat{E}}_{t_{i}}[\xi]:=\varphi_{i}(B_{t_{1}},\ldots,B_{t_{i}%
}-B_{t_{i-1}})\text{ for }i=N,\ldots,0,
\]
where $t_{0}=0$, $\mathbb{\hat{E}}[\xi]:=\mathbb{\hat{E}}_{t_{0}}[\xi]$,
$\varphi_{i}(x_{1},\ldots,x_{i})=u^{\varphi_{i+1}(x_{1},\ldots,x_{i},\cdot
)}(t_{i+1}-t_{i},0)$ and $u^{\varphi_{i+1}(x_{1},\ldots,x_{i},\cdot)}$ is the
viscosity solution (see \cite{CIP}) of the following PDE:
\[
\left \{
\begin{array}
[c]{l}%
\partial_{t}u-G(\partial_{xx}^{2}u)=0,\\
u(0,x)=\varphi_{i+1}(x_{1},\ldots,x_{i},x),
\end{array}
\right.
\]
for $i=N-1,\ldots,0$. The canonical process $B$ is called $G$-Brownian motion
under $\mathbb{\hat{E}}$.

For each $t\in \lbrack0,T]$ and $p\geq1$, denote by $L_{G}^{p}(\Omega_{t})$ the
completion of $Lip(\Omega_{t})$ under the norm $||\xi||_{L_{G}^{p}%
}:=(\mathbb{\hat{E}}[|\xi|^{p}])^{1/p}$. $\mathbb{\hat{E}}$ and $\mathbb{\hat
{E}}_{t}$ can be continuously extended to $L_{G}^{1}(\Omega_{T})$ under the
norm $||\cdot||_{L_{G}^{1}}$.

The following representation theorem for $G$-expectation can be found in
\cite{DHP11, HP09}.

\begin{theorem}
\label{th-re-1}There exists a unique weakly compact and convex set of
probability measures $\mathcal{P}$ on $(\Omega_{T},\mathcal{F}_{T})$ such that%
\[
\mathbb{\hat{E}}[\xi]=\sup_{P\in \mathcal{P}}E_{P}[\xi]\text{ for all }\xi \in
L_{G}^{1}(\Omega_{T}),
\]
where $\mathcal{F}_{t}=\mathcal{B}(\Omega_{t})=\sigma(\{B_{s}:s\leq t\})$ for
$t\leq T$.
\end{theorem}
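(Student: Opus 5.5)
We plan to follow the standard route of Denis--Hu--Peng and Hu--Peng. There are four steps: identify a family of probability measures that dominates $\mathbb{\hat{E}}$ on $Lip(\Omega_T)$, show that every sublinear expectation of this type is represented by such a family, extend the representation by continuity to $L_G^1(\Omega_T)$, and then close the family and prove uniqueness.

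\emph{Construction of the candidate set.} Let $(\tilde\Omega,\tilde{\mathcal F},\tilde P)$ carry a classical Brownian motion $W$ with augmented filtration $(\tilde{\mathcal F}_t)$. Let $\mathcal A$ be the set of $(\tilde{\mathcal F}_t)$-progressively measurable processes $\theta$ with values in $[\underline\sigma,\bar\sigma]$. Set $B^\theta_t=\int_0^t\theta_s dW_s$, let $P_\theta$ be its law on $(\Omega_T,\mathcal F_T)$, and put $\mathcal P_1=\{P_\theta:\theta\in\mathcal A\}$. The first key step is the identity $\mathbb{\hat E}[\xi]=\sup_{\theta}E_{\tilde P}[\xi(B^\theta)]$ for $\xi\in Lip(\Omega_T)$. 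We prove it by backward induction over the grid $t_N,\dots,t_0$ in the definition of $\mathbb{\hat E}_{t_i}$. At each step we need
\[
u^{\varphi}(t,0)=\sup_{\theta}E[\varphi(x+\textstyle\int_0^t\theta_sdW_s)]
\]
for the $G$-heat equation. This is the stochastic control representation of the viscosity solution: the HJB equation of this control problem is exactly $\partial_tu-G(\partial^2_{xx}u)=0$, because $\sup_{\gamma\in[\underline\sigma,\bar\sigma]}\frac12\gamma^2a=G(a)$. We get it from the dynamic programming principle together with viscosity uniqueness \cite{CIP}. Concatenating the controls on successive intervals, which uses measurable selection of $\varepsilon$-optimal controls depending on the past increments, gives the multi-step identity.

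\emph{Tightness and extension.} Since $\theta$ is bounded, BDG gives $E_{P}[|B_t-B_s|^4]\le C|t-s|^2$ uniformly over $P\in\mathcal P_1$. By Kolmogorov's criterion $\mathcal P_1$ is tight, so its closed convex hull $\mathcal P$ is weakly compact. The supremum is unchanged for bounded continuous $\xi$, in particular on $Lip(\Omega_T)$. For $\xi\in L_G^1$ pick $\xi_n\in Lip(\Omega_T)$ with $\mathbb{\hat E}[|\xi-\xi_n|]\to0$. Then $\sup_P E_P[|\xi-\xi_n|]\le \mathbb{\hat E}[|\xi-\xi_n|]$, where the right-hand side is understood as the capacity upper expectation, which agrees with $\mathbb{\hat E}$ on $Lip$ and is continuous under $\|\cdot\|_{L^1_G}$. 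Hence both sides converge and the representation extends to $L^1_G(\Omega_T)$. One must check that elements of $L^1_G$ are $\mathcal P$-q.s. defined; this follows because the completion is realized inside the space of $\mathcal F_T$-measurable functions modulo polar sets.

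\emph{Uniqueness.} Suppose $\mathcal P,\mathcal P'$ are both weakly compact, convex and represent $\mathbb{\hat E}$. Then $\sup_{\mathcal P}E_P[\varphi]=\sup_{\mathcal P'}E_P[\varphi]$ for all bounded Lipschitz cylinder functionals, and these are dense enough to give equality for all $\varphi\in C_b(\Omega_T)$. If some $Q\in\mathcal P'\setminus\mathcal P$ existed, Hahn--Banach separation in the space of finite measures with the weak topology (whose dual is $C_b(\Omega_T)$) would give $\varphi\in C_b$ with $E_Q[\varphi]>\sup_{P\in\mathcal P}E_P[\varphi]$, which is a contradiction. Uniqueness follows.

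\textbf{Main obstacle.} We expect the main obstacle to be the first step, namely the rigorous identification of the iterated viscosity solutions with the control supremum. This needs the dynamic programming principle together with measurable selection across time steps. We would first try to avoid fine measurability issues by restricting to piecewise constant, simple $\theta$: these still reach the supremum up to $\varepsilon$, by a stability estimate in $\theta$ using BDG, and for them the concatenation argument is elementary.
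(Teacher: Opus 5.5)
The paper does not prove this theorem itself; it cites Denis--Hu--Peng and Hu--Peng. Your outline is essentially the Denis--Hu--Peng argument and is correct: laws of $\int_0^{\cdot}\theta_s\,dW_s$ with $\theta$ valued in $[\underline{\sigma},\bar{\sigma}]$, the control representation of the $G$-heat equation iterated backward over the grid, Kolmogorov tightness and the closed convex hull, extension to $L_G^1(\Omega_T)$ through its quasi-sure realization, and Hahn--Banach separation for uniqueness. Note that the other cited route (Hu--Peng) bypasses the step you flag as the main obstacle by obtaining the measures directly from Hahn--Banach on $Lip(\Omega_T)$ plus Kolmogorov's extension and continuity theorems, and that for uniqueness you can separate with elements of $Lip(\Omega_T)$ itself (the topology on finite signed measures induced by $Lip(\Omega_T)$ is Hausdorff and coarser than the weak one), which avoids your density claim in $C_b(\Omega_T)$.
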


The capacity associated with $\mathcal{P}$ is defined by%
\[
c(A):=\sup_{P\in \mathcal{P}}P(A)\text{ for }A\in \mathcal{F}_{T}.
\]
A set $A\in \mathcal{F}_{T}$ is polar if $c(A)=0$. A property holds
\textquotedblleft quasi-surely" (q.s. for short) if it holds outside a polar
set, and we do not distinguish two random variables $X$ and $Y$ if $X=Y$ q.s.

The following Doob's inequality for $G$-martingale can be found in \cite{STZ,
Song11}.

\begin{theorem}
\label{Doob}Let $\xi \in L_{G}^{p}(\Omega_{T})$ with $p>1$. Then%
\[
\mathbb{\hat{E}}\left[  \sup_{t\leq T}\mathbb{\hat{E}}_{t}[\xi]\right]
\leq \frac{p}{p-1}\left(  \mathbb{\hat{E}}[|\xi|^{p}]\right)  ^{1/p}.
\]

\end{theorem}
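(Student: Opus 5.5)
We prove the Doob-type inequality of Theorem \ref{Doob} by reducing it to the classical Doob inequality under each $P\in\mathcal{P}$, with the extra supremum over $\mathcal{P}$ as the main difficulty. Throughout we use the representation in Theorem \ref{th-re-1}.

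\textbf{Step 1: reduction to bounded cylinder functions.} First take $\xi\in Lip(\Omega_T)$. The map $\xi\mapsto \sup_{t\le T}\mathbb{\hat{E}}_t[\xi]$ satisfies $|\mathbb{\hat{E}}_t[\xi]-\mathbb{\hat{E}}_t[\eta]|\le \mathbb{\hat{E}}_t[|\xi-\eta|]$. So once the estimate holds on $Lip(\Omega_T)$, applying it to $|\xi-\eta|$ controls $\mathbb{\hat{E}}[\sup_t|\mathbb{\hat{E}}_t[\xi]-\mathbb{\hat{E}}_t[\eta]|]$ by $\frac{p}{p-1}\|\xi-\eta\|_{L^p_G}$. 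Hence the inequality extends by density to all of $L_G^p(\Omega_T)$. We also replace $\xi$ by $|\xi|$, which is harmless since $\mathbb{\hat{E}}_t[\xi]\le\mathbb{\hat{E}}_t[|\xi|]$.

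\textbf{Step 2: the process under a fixed $P$.} For $\xi\in Lip(\Omega_T)$, $\xi\ge 0$, set $M_t=\mathbb{\hat{E}}_t[\xi]$. This process is nonnegative and has continuous paths, as seen from the explicit construction via the PDE. We need its supermartingale property under each $P\in\mathcal{P}$. The key fact is the dynamic representation
\[
\mathbb{\hat{E}}_t[\xi]=\operatorname*{ess\,sup}_{Q\in\mathcal{P}(t,P)}E_Q[\xi|\mathcal{F}_t]\quad P\text{-a.s.},
\]
where $\mathcal{P}(t,P)=\{Q\in\mathcal{P}:Q=P \text{ on }\mathcal{F}_t\}$; this is the standard result of Soner--Touzi--Zhang / Hu--Peng. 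Because the family $\{E_Q[\xi|\mathcal{F}_t]\}$ is upward directed (by pasting measures), we obtain $E_P[M_t|\mathcal{F}_s]\le M_s$. So $M$ is a $P$-supermartingale, though not a $P$-martingale.

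\textbf{Step 3 (main obstacle): Doob's inequality for a nonlinear martingale.} Classical Doob's $L^p$ inequality fails for general supermartingales. We therefore use the specific structure of $M$. For a level $\lambda>0$, let $\tau=\inf\{t:M_t\ge\lambda\}\wedge T$. Applying the optional stopping version of the dynamic programming principle $\mathbb{\hat{E}}[\mathbb{\hat{E}}_\tau[\xi]\mathbf 1_{\{\tau<T\}}]\le\mathbb{\hat{E}}[\xi\mathbf 1_{\{\tau<T\}}]$ gives the weak-type bound
\[
\lambda\, c(\sup_t M_t\ge\lambda)\le \mathbb{\hat{E}}[\xi\mathbf 1_{\{\sup_t M_t\ge\lambda\}}].
\]
This bound requires $\mathbb{\hat{E}}_\tau$ at stopping times, which can be approximated by discrete stopping times where the tower property holds. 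Integrating the tail as in the classical proof is not directly possible, because $c$ is not additive and $\sup_P$ does not commute with $\int d\lambda$. Instead we work under a fixed $P$ with the measures $Q\in\mathcal{P}(\tau,P)$. We obtain $\lambda P(\sup M\ge\lambda)\le \sup_{Q} E_Q[\xi\mathbf 1_{\{\sup M\ge\lambda\}}]$, then integrate in $\lambda$ and apply Hölder with $\mathbb{\hat{E}}[\xi^p]$. This yields
\[
\mathbb{\hat{E}}\Big[\sup_t M_t\Big]\le \frac{p}{p-1}\big(\mathbb{\hat{E}}[\xi^p]\big)^{1/p}
\]
after taking $\sup_P$. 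The delicate point I would check first is that the constant survives with exponent $1$ on the left. It does, because we only need the $L^1$ norm of the maximum: the tail integral $\int_0^\infty P(\sup M\ge\lambda)d\lambda$ can be bounded using $\xi\le \xi^p/(p\lambda^{p-1})+\ldots$, i.e. by a Young-type splitting rather than the full $L^p$ maximal bound.

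If the direct argument fails, my fallback is to cite \cite{STZ, Song11}, as the paper does.
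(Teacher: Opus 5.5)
The paper gives no proof of this statement. It quotes it from the references it cites, which is also your fallback. Your Steps 1 and 2 are sound, but Step 3 carries the whole estimate and is not yet a proof.

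First, Step 3 rests on an optional-stopping tower property for $\mathbb{\hat{E}}_\tau$ at the hitting time $\tau$. It also needs the representation $M_\tau=\operatorname*{ess\,sup}_{Q\in\mathcal{P}(\tau,P)}E_Q[\xi|\mathcal{F}_\tau]$. Neither is provided by the $L_G^1$ framework used here; both need a quasi-sure extension of $\mathbb{\hat{E}}_t$ and pasting of $\mathcal{P}$ at stopping times. Discretizing $\tau$ does not help by itself. You have $E_P[M_{\tau_n}\mathbf{1}_A]=\sum_i E_P[M_{t_i}\mathbf{1}_{A\cap\{\tau_n=t_i\}}]$, but bounding each term by $\mathbb{\hat{E}}[\xi\mathbf{1}_{A\cap\{\tau_n=t_i\}}]$ and summing goes the wrong way under subadditivity. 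You would have to paste a single $Q$ along the grid.

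Second, the passage from the weak-type bound to $E_P[\sup_t M_t]$ is left as ``$\ldots$''. Put $A_\lambda:=\{\sup_t M_t\ge\lambda\}$. Integrating $\lambda^{-1}\sup_Q E_Q[\xi\mathbf{1}_{A_\lambda}]$ over $\lambda$ does not close, because the maximizing $Q$ depends on $\lambda$ and Fubini is unavailable for $\mathbb{\hat{E}}$. Your Young-type splitting closes only if you absorb $\tfrac{p-1}{p}\lambda\sup_Q Q(A_\lambda)$ into the left side. That requires $Q(A_\lambda)=P(A_\lambda)$ for $Q\in\mathcal{P}(\tau,P)$, i.e.\ $A_\lambda\in\mathcal{F}_\tau$, and this is exactly the step hidden in the dots.

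Once you apply H\"older at each fixed $\lambda$, the obstacle you name disappears. From your own capacity bound,
$\lambda c(A_\lambda)\le\mathbb{\hat{E}}[\xi\mathbf{1}_{A_\lambda}]\le(\mathbb{\hat{E}}[\xi^p])^{1/p}c(A_\lambda)^{1-1/p}$, so $c(A_\lambda)\le\lambda^{-p}\mathbb{\hat{E}}[\xi^p]$. This tail bound is deterministic. Then $E_P[\sup_t M_t]=\int_0^\infty P(A_\lambda)\,d\lambda\le\int_0^\infty\min\{1,c(A_\lambda)\}\,d\lambda$ for every $P$, with no interchange of $\sup_P$ and $d\lambda$.

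The missing idea, which avoids random times entirely, is to pass to the $p$-th power. Take $\xi\ge0$ in $Lip(\Omega_T)$. The conditional H\"older inequality (or your ess sup representation plus Jensen under each $Q$) gives $M_t\le S_t^{1/p}$ with $S_t:=\mathbb{\hat{E}}_t[\xi^p]$. By your Step 2 applied to $\xi^p\in Lip(\Omega_T)$, $S$ is a nonnegative continuous $P$-supermartingale with deterministic initial value $S_0=\mathbb{\hat{E}}[\xi^p]$.

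Doob's $L^p$ inequality does fail for supermartingales. However, the maximal inequality $\lambda P(\sup_{t\le T}S_t\ge\lambda)\le E_P[S_0]$ holds for nonnegative supermartingales, because it involves $S_0$ rather than $S_T$. This is precisely the ``specific structure'' you were looking for. Hence $P(A_\lambda)\le P(\sup_t S_t\ge\lambda^p)\le\min\{1,\lambda^{-p}\mathbb{\hat{E}}[\xi^p]\}$, and
\[
E_P\Big[\sup_{t\le T}M_t\Big]\le\int_0^\infty\min\big\{1,\lambda^{-p}\mathbb{\hat{E}}[\xi^p]\big\}\,d\lambda=\frac{p}{p-1}\big(\mathbb{\hat{E}}[\xi^p]\big)^{1/p}.
\]
Taking $\sup_{P\in\mathcal{P}}$ and using your Step 1 gives the theorem with exactly the stated constant.
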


Let $M_{G}^{0}(0,T)$ be the space of the following simple processes: for each
integer $N\geq1$ and $0=t_{0}<\cdots<t_{N}=T$,%
\[
\eta_{t}=\sum_{i=0}^{N-1}\xi_{i}I_{[t_{i},t_{i+1})}(t),\text{ }t\in
\lbrack0,T],
\]
where $\xi_{i}\in Lip(\Omega_{t_{i}})$ for $i=0,1,\ldots,N-1$. Let $S_{G}%
^{0}(0,T)$ be the space of the following continuous processes: for each
integer $N\geq1$ and $0<t_{1}<\cdots<t_{N}=T$,%
\[
\eta_{t}=\phi(t,B_{t_{1}\wedge t},\ldots,B_{t_{N}\wedge t}),\text{ }%
t\in \lbrack0,T],
\]
where $\phi \in C_{b.Lip}(\mathbb{R}^{1+N})$.

For each given $p$, $\bar{p}\geq1$, let $M_{G}^{p,\bar{p}}(0,T)$ denote the
completion of $M_{G}^{0}(0,T)$ under the norm $||\eta||_{M_{G}^{p,\bar{p}}%
}:=\left(  \mathbb{\hat{E}}\left[  \left(  \int_{0}^{T}|\eta_{t}%
|^{p}dt\right)  ^{\bar{p}/p}\right]  \right)  ^{1/\bar{p}}$ and let $S_{G}%
^{p}(0,T)$ denote the completion of $S_{G}^{0}(0,T)$ under the norm
$||\eta||_{S_{G}^{p}}:=\left(  \mathbb{\hat{E}}[\sup_{t\leq T}|\eta_{t}%
|^{p}]\right)  ^{1/p}$. Set $M_{G}^{p}(0,T)=M_{G}^{p,p}(0,T)$. For each
$\eta \in M_{G}^{2,\bar{p}}(0,T)$, the $G$-It\^{o} integral $\int_{0}^{T}%
\eta_{t}dB_{t}$ is well defined (see \cite{P2019} for more details).

In the following part of this section, we give the formulation of our control
problem. Let $\beta>2$ be a given constant throughout the paper, and let the
control domain $U$ be a given nonempty convex set of $\mathbb{R}^{m}$. The set
of admissible controls is denoted by%
\[
\mathcal{U}[0,T]=M_{G}^{2,\beta}(0,T;U):=\{u\in M_{G}^{2,\beta}(0,T;\mathbb{R}%
^{m}):u_{t}\in U\text{ for }t\in \lbrack0,T]\},
\]
where%
\[
M_{G}^{2,\beta}(0,T;\mathbb{R}^{m})=\{(u_{1},\ldots,u_{m})^{T}:u_{i}\in
M_{G}^{2,\beta}(0,T)\text{ for }i=1,\ldots,m\}.
\]
For simplicity, we consider the followng $1$-dimensional coupled
forward-backward stochastic control system. The results are similar in the
general multi-dimensional case studied in \cite{Li}. For each $u\in
\mathcal{U}[0,T]$,%

\begin{equation}
\left \{
\begin{array}
[c]{rl}%
dX_{t}^{u}= & b(t,X_{t}^{u},Y_{t}^{u},u_{t})dt+\sigma(t,X_{t}^{u},Y_{t}%
^{u},u_{t})dB_{t},\\
dY_{t}^{u}= & f(t,X_{t}^{u},Y_{t}^{u},Z_{t}^{u},u_{t})dt+Z_{t}^{u}%
dB_{t}+dK_{t}^{u},\\
X_{0}^{u}= & x_{0}\in \mathbb{R},\text{ }Y_{T}^{u}=\phi(X_{T}^{u}),\text{ }%
t\in \lbrack0,T],
\end{array}
\right.  \label{e1-1}%
\end{equation}
where $b$, $\sigma:[0,T]\times \Omega_{T}\times \mathbb{R}^{2+m}\rightarrow
\mathbb{R}$, $f:[0,T]\times \Omega_{T}\times \mathbb{R}^{3+m}\rightarrow
\mathbb{R}$ and $\phi:\Omega_{T}\times \mathbb{R}\rightarrow \mathbb{R}$ satisfy
the following assumptions:

\begin{description}
\item[(H1)] For any $(x,y,z,v)\in \mathbb{R}^{3+m}$, $(b(t,x,y,v))_{t\leq T}$,
$(f(t,x,y,z,v))_{t\leq T}\in M_{G}^{1,\beta}(0,T)$, $(\sigma(t,x,y,v))_{t\leq
T}\in M_{G}^{2,\beta}(0,T)$, $\phi(x)\in L_{G}^{\beta}(\Omega_{T})$ with
$\beta>2$.

\item[(H2)] $b$, $\sigma$, $f$ and $\phi$ are differentiable in $(x,y,z,v)$
such that%
\[
L_{1}:=\max \left \{  ||b_{x}||_{\infty}\text{, }||\sigma_{x}||_{\infty}\text{,
}||f_{y}||_{\infty}\text{, }||f_{z}||_{\infty}\right \}  <\infty,\text{ }%
L_{2}:=\max \left \{  ||b_{y}||_{\infty}\text{, }||\sigma_{y}||_{\infty
}\right \}  <\infty,
\]%
\[
L_{3}:=\max \left \{  ||f_{x}||_{\infty}\text{, }||\phi_{x}||_{\infty}\right \}
<\infty,\text{ }L_{4}:=\max \left \{  ||b_{v}||_{\infty}\text{, }||\sigma
_{v}||_{\infty}\text{, }||f_{v}||_{\infty}\right \}  <\infty,
\]
where $||b_{x}||_{\infty}:=\sup \{|b_{x}(t,\omega,x,y,v)|:t\in \lbrack0,T],$
$\omega \in \Omega_{T}$, $x$, $y\in \mathbb{R}$, $v\in U\}$, and other
definitions are similar.

\item[(H3)] Weakly coupling condition: $L_{2}L_{3}\leq \delta$, where the
constant $\delta$ satisfies%
\[
8\delta^{2}T(1+T)[T+4\bar{\sigma}^{2}+2\bar{\sigma}^{2}T(1+2L_{1}+2\bar
{\sigma}^{2}L_{1}^{2})]\exp(T+2L_{1}T+(32\bar{\sigma}^{2}+8\underline{\sigma
}^{-2})L_{1}^{2}T+8L_{1}^{2}T^{2})<1.
\]

\item[(H4)] There exists a modulus of continuity $\rho:[0,\infty
)\rightarrow \lbrack0,\infty)$ such that for each $t\leq T$, $x$, $x^{\prime}$,
$y$, $y^{\prime}$, $z$, $z^{\prime}\in \mathbb{R}$, $v$, $v^{\prime}\in U$,%
\[
|\psi(t,x,y,z,v)-\psi(t,x^{\prime},y^{\prime},z^{\prime},v^{\prime})|\leq
\rho(|x-x^{\prime}|+|y-y^{\prime}|+|z-z^{\prime}|+|v-v^{\prime}|),
\]
where $\psi$ is the derivative of $b$, $\sigma$, $f$, $\phi$ in $(x,y,z,v)$.
\end{description}

\begin{definition}
For each given $p\in \lbrack2,\beta)$ and $u\in \mathcal{U}[0,T]$, $(X^{u}%
$,$Y^{u}$,$Z^{u}$,$K^{u})$ is called an $L^{p}$-solution of (\ref{e1-1}) if

\begin{description}
\item[(i)] $X^{u}$, $Y^{u}\in S_{G}^{p}(0,T)$, $Z^{u}\in M_{G}^{2,p}(0,T)$,
$K_{T}^{u}\in L_{G}^{p}(\Omega_{T})$;

\item[(ii)] $K^{u}$ is a non-increasing process satisfying $K_{0}^{u}=0$ and
$K_{t}^{u}=\mathbb{\hat{E}}_{t}[K_{T}^{u}]$ for $t\in \lbrack0,T]$;

\item[(iii)] $(X^{u}$,$Y^{u}$,$Z^{u}$,$K^{u})$ satisfies (\ref{e1-1}).
\end{description}
\end{definition}

The following theorem can be found in \cite{Li}.

\begin{theorem}
\label{th1-1}Suppose assumptions (H1)-(H3) hold. Then there exists a $\bar
{p}\in(2,\beta)$ such that the following results hold.

\begin{description}
\item[(i)] For each $u\in \mathcal{U}[0,T]$ and $p\in \lbrack2,\bar{p}]$,
(\ref{e1-1}) has a unique $L^{p}$-solution $(X^{u}$,$Y^{u}$,$Z^{u}$,$K^{u})$.

\item[(ii)] For each $u$, $\tilde{u}\in \mathcal{U}[0,T]$ and $p\in
\lbrack2,\bar{p}]$, there exists a constant $C_{1}$ depending on $p$, $T$,
$\bar{\sigma}$, $\underline{\sigma}$, $L_{1}$, $L_{2}$ and $L_{3}$ such that%
\[
\mathbb{\hat{E}}\left[  \sup_{t\leq T}|\hat{X}_{t}|^{p}\right]  \leq
C_{1}\mathbb{\hat{E}}\left[  \left(  \int_{0}^{T}(|\hat{b}_{t}|+|\hat{f}%
_{t}|)dt\right)  ^{p}+\left(  \int_{0}^{T}|\hat{\sigma}_{t}|^{2}dt\right)
^{p/2}\right]  ,
\]
where $\hat{X}_{t}=X_{t}^{\tilde{u}}-X_{t}^{u}$, $\hat{b}_{t}=b(t,X_{t}%
^{u},Y_{t}^{u},\tilde{u}_{t})-b(t,X_{t}^{u},Y_{t}^{u},u_{t})$, $\hat{\sigma
}_{t}=\sigma(t,X_{t}^{u},Y_{t}^{u},\tilde{u}_{t})-\sigma(t,X_{t}^{u},Y_{t}%
^{u},u_{t})$ and $\hat{f}_{t}=f(t,X_{t}^{u},Y_{t}^{u},Z_{t}^{u},\tilde{u}%
_{t})-f(t,X_{t}^{u},Y_{t}^{u},Z_{t}^{u},u_{t})$.

\item[(iii)] For each $u$, $\tilde{u}\in \mathcal{U}[0,T]$ and $p\in
\lbrack2,\bar{p}]$, there exists a constant $C_{2}$ depending on $p$, $T$,
$\bar{\sigma}$, $\underline{\sigma}$ and $L_{1}$ such that for any
$t\in \lbrack0,T]$,%
\[
|Y_{t}^{\tilde{u}}-Y_{t}^{u}|^{p}\leq C_{2}\mathbb{\hat{E}}_{t}\left[  \left(
|\phi(X_{T}^{\tilde{u}})-\phi(X_{T}^{u})|+\int_{0}^{T}|f(s,X_{s}^{\tilde{u}%
},Y_{s}^{u},Z_{s}^{u},\tilde{u}_{s})-f(s,X_{s}^{u},Y_{s}^{u},Z_{s}^{u}%
,u_{s})|ds\right)  ^{p}\right]  .
\]

\end{description}
\end{theorem}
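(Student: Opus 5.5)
The plan is a contraction (fixed-point) argument on the forward component, with the weakly coupling condition (H3) supplying the contraction constant. The estimates (ii) and (iii) come from stability bounds for the decoupled equations, combined through the same smallness.

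\textbf{Step 1: fixed-point map.} Fix $u\in\mathcal{U}[0,T]$ and $p\in[2,\bar p]$. Given $x\in S_G^p(0,T)$, solve the decoupled $G$-BSDE
\[
dY_t=f(t,x_t,Y_t,Z_t,u_t)dt+Z_tdB_t+dK_t,\qquad Y_T=\phi(x_T).
\]
It is well posed in $L^p$ by the theory of Hu et al. \cite{HJPS1, HJPS}, since $f$ is Lipschitz in $(y,z)$ with constant $L_1$. Then solve the $G$-SDE
\[
dX_t=b(t,X_t,Y_t,u_t)dt+\sigma(t,X_t,Y_t,u_t)dB_t,\qquad X_0=x_0 .
\]
This defines $\Gamma(x)=X$.

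\textbf{Step 2: contraction.} Take two inputs $x,x'$.
\begin{itemize}
\item For the backward part I use the linearization/Girsanov-type estimate for $G$-BSDEs under $\hat{\mathbb{E}}_t$. Here $\underline{\sigma}^{-2}$ enters through the $z$-term, via $d\langle B\rangle\ge\underline\sigma^2dt$. It gives $|\hat Y_t|^2\le C\,\hat{\mathbb{E}}_t[(L_3|\hat x_T|+L_3\int_0^T|\hat x_s|ds)^2]$ with an exponential factor.
\item For the forward part, It\^o's formula together with BDG for $G$-It\^o integrals (constant $\bar\sigma^2$) and Gronwall give $\hat{\mathbb{E}}[\sup|\hat X|^2]\le C L_2^2\,\hat{\mathbb{E}}[\int_0^T|\hat Y|^2dt]$.
\end{itemize}
Composing the two bounds produces a factor $(L_2L_3)^2$ times exactly the explicit constant appearing in (H3). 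Hence (H3) makes $\Gamma$ a strict contraction in $S^2_G$ and gives uniqueness.

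For $p>2$, the backward estimate needs $\hat{\mathbb{E}}[\sup_t(\hat{\mathbb{E}}_t[\xi^2])^{p/2}]$. I control this with Doob's inequality (Theorem \ref{Doob}) applied to $\xi^2$ with exponent $p/2>1$. This explains why only $p$ near $2$ works: the Doob constant $\frac{p/2}{p/2-1}$ blows up as $p\downarrow2$, so I would instead choose $\bar p$ close to $2$ by continuity. The strict inequality in (H3) survives a small perturbation of the constants, via $L^{p}$-interpolation and Hölder between $\hat{\mathbb E}_t$-estimates. This choice of $\bar p$ is the \textbf{main obstacle}: the $\sup$ over conditional $G$-expectations needs Doob, and the integrability available is limited to $\beta$.

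\textbf{Step 3: estimates (ii) and (iii).} Both follow from the same comparison, now between controls.
\begin{itemize}
\item For (iii), view $Y^{\tilde u}$ and $Y^u$ as solutions of BSDEs with generators differing by $f(s,X^{\tilde u}_s,Y^u_s,Z^u_s,\tilde u_s)-f(s,X^u_s,Y^u_s,Z^u_s,u_s)$ and terminal values differing by $\phi(X^{\tilde u}_T)-\phi(X^u_T)$. The standard $G$-BSDE a priori estimate, linearizing in $(y,z)$ only, gives the pathwise conditional bound. Its constant depends only on $p,T,\bar\sigma,\underline\sigma,L_1$.
\item For (ii), combine (iii), which bounds $\hat Y$ by $L_3$ times $\hat X$ plus $\hat f$, with the forward stability estimate, which bounds $\hat X$ by $L_2\hat Y$ plus $\hat b$ and $\hat\sigma$. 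Taking $p$-th moments with Doob as above, the term involving $\hat X$ on the right has coefficient less than $1$ by (H3) and the choice of $\bar p$. It is absorbed into the left-hand side, leaving the stated bound with $C_1$.
\end{itemize}
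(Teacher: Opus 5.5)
The paper does not prove this theorem; it is quoted from the cited work of Li. Your outline therefore has to stand on its own, and it breaks exactly at the step you flag as the main obstacle. Both the passage from $p=2$ to $p>2$ and the absorption in (ii) go through $\sup_t|\hat Y_t|$ and Doob's inequality, and that cannot close. Theorem \ref{Doob} controls $\hat{\mathbb E}[\sup_t\hat{\mathbb E}_t[\eta]]$ only by $(\hat{\mathbb E}[|\eta|^q])^{1/q}$ with $q>1$. Starting from $|\hat Y_t|^2\le C\hat{\mathbb E}_t[\xi^2]$, however you apply it, you bound $\sup_t|\hat Y_t|$ at some exponent by $\xi$ at a strictly larger exponent, hence by $\sup_t|\hat x_t|$ (or $\sup_t|\hat X_t|$ in (ii)) at a strictly larger exponent. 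The resulting inequality compares two different norms. It is not a contraction, and in (ii) the $\hat X$-term on the right is not the quantity on the left, so nothing can be absorbed. Moreover, a constant that blows up as $p\downarrow 2$ cannot be recovered ``by continuity'' from the $p=2$ constant in (H3); the appeal to interpolation and H\"older does not repair this. Compare Lemma \ref{le2-1}: the paper uses Doob only a posteriori, and pays for it by dropping from $\bar p$ to $(2+\bar p)/2$.

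The missing observation is that $\sup_t|\hat Y_t|$ is never needed, and you already avoid it at $p=2$. The forward equation sees $\hat Y$ only through time integrals, so BDG and Gronwall give
\[
\hat{\mathbb E}\Big[\sup_{t\le T}|\hat X_t|^p\Big]\le C L_2^p\int_0^T\hat{\mathbb E}\big[|\hat Y_t|^p\big]\,dt+(\text{data terms}).
\]
Now use the $L^p$ backward bound (iii) itself, rather than the $L^2$ bound raised to the power $p/2$, together with the tower property $\hat{\mathbb E}[\hat{\mathbb E}_t[\cdot]]=\hat{\mathbb E}[\cdot]$. This gives $\hat{\mathbb E}[|\hat Y_t|^p]\le C_2\hat{\mathbb E}[(L_3|\hat X_T|+\int_0^T(L_3|\hat X_s|+|\hat f_s|)ds)^p]$ with no loss of exponent. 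The coefficient of $\hat{\mathbb E}[\sup_t|\hat X_t|^p]$ on the right is then of order $(L_2L_3)^p$ times constants continuous in $p$. (H3) is designed to make it $<1$ at $p=2$, and strictness gives some $\bar p>2$. This closes both the contraction and the absorption in (ii).

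A second point you gloss over: the $G$-BSDE theory of Hu et al., with terminal value only in $L_G^p$, yields $(Y,Z,K)$ only at exponents $\alpha<p$. So $\Gamma$ is not a self-map of $S_G^p$; at $p=2$ you do not even get $Y\in S_G^2$. Banach's theorem therefore cannot be applied verbatim in $S_G^2$. The iteration has to be run with integrability strictly above the target exponent, which (H1) supplies up to $\beta$, and membership in $S_G^p$ for $p\le\bar p$ recovered afterwards. That is where the requirement $\bar p<\beta$ enters.
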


The cost functional of our control problem is defined by
\begin{equation}
J(u):=Y_{0}^{u}, \label{e1-2}%
\end{equation}
where $(X^{u}$,$Y^{u}$,$Z^{u}$,$K^{u})$ is the $L^{2}$-solution of
(\ref{e1-1}). The stochastic optimal control problem is to minimize $J(u)$
over $\mathcal{U}[0,T]$, and a control $u^{\ast}\in \mathcal{U}[0,T]$ is called
optimal if%
\[
J(u^{\ast})=\min_{u\in \mathcal{U}[0,T]}J(u).
\]

\section{Necessary conditions for optimality}

Let $u^{\ast}\in \mathcal{U}[0,T]$ be an optimal control for (\ref{e1-2}) and
let $(X^{\ast}$,$Y^{\ast}$,$Z^{\ast}$,$K^{\ast})$ be the solution of
(\ref{e1-1}) corresponding to $u^{\ast}$. For each fixed $u\in \mathcal{U}%
[0,T]$, set%
\[
u^{\varepsilon}=(1-\varepsilon)u^{\ast}+\varepsilon u=u^{\ast}+\varepsilon
(u-u^{\ast})
\]
for each $\varepsilon \in \lbrack0,1]$. Since the control domain $U$ is a convex
set, we can obtain $u^{\varepsilon}\in \mathcal{U}[0,T]$. Let $(X^{\varepsilon
}$,$Y^{\varepsilon}$,$Z^{\varepsilon}$,$K^{\varepsilon})$ be the solution of
(\ref{e1-1}) corresponding to $u^{\varepsilon}$. Now, we first establish some
relevant estimates, and for the sake of convenience in presentation, the
constant $C$ will change from line to line in the following proof.

\begin{lemma}
\label{le2-1}Suppose assumptions (H1)-(H3) hold. Let $u^{\ast}\in
\mathcal{U}[0,T]$ be an optimal control and let $u\in \mathcal{U}[0,T]$ be
given. Then there exists a constant $C$ depending on $\bar{p}$, $T$,
$\bar{\sigma}$, $\underline{\sigma}$, $L_{1}$, $L_{2}$, $L_{3}$, $L_{4}$, $u$
and $u^{\ast}$ such that for any $p\in \lbrack2,\bar{p}]$,%
\begin{equation}
\mathbb{\hat{E}}\left[  \sup_{t\leq T}|X_{t}^{\varepsilon}-X_{t}^{\ast}%
|^{p}\right]  \leq C\varepsilon^{p}, \label{e2-2}%
\end{equation}
and for any $p^{\prime}\in \lbrack2,(2+\bar{p})/2],$
\begin{equation}
\mathbb{\hat{E}}\left[  \sup_{t\leq T}|Y_{t}^{\varepsilon}-Y_{t}^{\ast
}|^{p^{\prime}}\right]  \leq C\varepsilon^{p^{\prime}}, \label{e2-3}%
\end{equation}
where $\bar{p}\in(2,\beta)$ is defined in Theorem \ref{th1-1}.
\end{lemma}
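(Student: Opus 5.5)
For (\ref{e2-2}) I will apply Theorem \ref{th1-1} (ii) with $\tilde{u}=u^{\varepsilon}$ and $u=u^{\ast}$. By the mean value theorem and the bound $L_{4}$ in (H2),
\[
|\hat{b}_{t}|+|\hat{f}_{t}|+|\hat{\sigma}_{t}|\leq 3L_{4}|u_{t}^{\varepsilon}-u_{t}^{\ast}|=3L_{4}\varepsilon|u_{t}-u_{t}^{\ast}|.
\]
The right-hand side of (ii) is therefore bounded by
\[
C\varepsilon^{p}\,\mathbb{\hat{E}}\left[\left(\int_{0}^{T}|u_{t}-u_{t}^{\ast}|dt\right)^{p}+\left(\int_{0}^{T}|u_{t}-u_{t}^{\ast}|^{2}dt\right)^{p/2}\right].
\]
By Cauchy--Schwarz, $\int_{0}^{T}|u_{t}-u_{t}^{\ast}|dt\leq T^{1/2}(\int_{0}^{T}|u_{t}-u_{t}^{\ast}|^{2}dt)^{1/2}$. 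Since $u,u^{\ast}\in M_{G}^{2,\beta}(0,T)$ and $p\leq\bar{p}<\beta$, this expectation is finite and depends only on $u$, $u^{\ast}$ and $\bar{p}$ (bound $x^{p}\leq 1+x^{\beta}$ for $x\ge 0$). This gives (\ref{e2-2}) with $C$ uniform in $p\in[2,\bar{p}]$.

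For (\ref{e2-3}) I will use Theorem \ref{th1-1} (iii) with exponent $p'$. First I bound the integrand:
\[
|\phi(X_{T}^{\varepsilon})-\phi(X_{T}^{\ast})|\leq L_{3}|X_{T}^{\varepsilon}-X_{T}^{\ast}|,
\]
\[
|f(s,X_{s}^{\varepsilon},Y_{s}^{\ast},Z_{s}^{\ast},u_{s}^{\varepsilon})-f(s,X_{s}^{\ast},Y_{s}^{\ast},Z_{s}^{\ast},u_{s}^{\ast})|\leq L_{3}|X_{s}^{\varepsilon}-X_{s}^{\ast}|+L_{4}\varepsilon|u_{s}-u_{s}^{\ast}|.
\]
Setting $\xi:=C\sup_{t\leq T}|X_{t}^{\varepsilon}-X_{t}^{\ast}|+C\varepsilon\int_{0}^{T}|u_{s}-u_{s}^{\ast}|ds$, (iii) gives
\[
\sup_{t\leq T}|Y_{t}^{\varepsilon}-Y_{t}^{\ast}|^{p'}\leq C_{2}\sup_{t\leq T}\mathbb{\hat{E}}_{t}[\xi^{p'}].
\]
The supremum over $t$ sits outside a conditional expectation, so I cannot simply take expectations. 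Instead I will use Doob's inequality (Theorem \ref{Doob}) applied to $\xi^{p'}$ with exponent $q=\bar{p}/p'>1$:
\[
\mathbb{\hat{E}}\left[\sup_{t\leq T}\mathbb{\hat{E}}_{t}[\xi^{p'}]\right]\leq \frac{q}{q-1}\left(\mathbb{\hat{E}}[\xi^{\bar{p}}]\right)^{p'/\bar{p}}.
\]
Since $p'\leq(2+\bar{p})/2<\bar{p}$, we indeed have $q>1$, and
\[
q\geq \frac{2\bar{p}}{2+\bar{p}}>1
\]
uniformly in $p'$, so the Doob constant is uniform. By (\ref{e2-2}) with $p=\bar{p}$ and the moment bound on $u-u^{\ast}$, we get $\mathbb{\hat{E}}[\xi^{\bar{p}}]\leq C\varepsilon^{\bar{p}}$. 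Hence the right-hand side is at most $C\varepsilon^{p'}$, which is (\ref{e2-3}).

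The main obstacle is exactly this interchange of $\sup_{t}$ and $\mathbb{\hat{E}}_{t}$. It explains the restricted range $p'\leq(2+\bar{p})/2$: one needs integrability strictly above $p'$, which (\ref{e2-2}) provides up to $\bar{p}$. One technical point needs checking: that $\xi^{p'}\in L_{G}^{q}(\Omega_{T})$, so that Doob's inequality applies. I expect this to follow from $X^{\varepsilon},X^{\ast}\in S_{G}^{\bar{p}}(0,T)$ and $u,u^{\ast}\in M_{G}^{2,\beta}$. The (H4) assumption is not needed here.
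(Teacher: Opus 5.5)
Your argument is essentially the paper's: Theorem~\ref{th1-1} (ii) with the bound $|\hat b_t|+|\hat\sigma_t|+|\hat f_t|\le C\varepsilon|u_t-u_t^{\ast}|$ for $X$, then Theorem~\ref{th1-1} (iii) followed by Doob's inequality with exponent $\bar p/p'>1$ for $Y$. The one point to tighten is uniformity in the exponent: the constants $C_1$, $C_2$ in Theorem~\ref{th1-1} depend on $p$, so invoking (ii) and (iii) at every $p$ (resp.\ $p'$) does not by itself give a single $C$ valid for all exponents, since your remarks only make the moment factor and the Doob constant uniform; the paper avoids this by applying (ii) only at $\bar p$ and (iii) only at $p_1=(2+\bar p)/2$, then passing to smaller exponents via $\mathbb{\hat{E}}[|\xi|^{p}]\le(\mathbb{\hat{E}}[|\xi|^{\bar p}])^{p/\bar p}$ (and likewise with $p_1$ in place of $\bar p$ for $Y$).
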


\begin{proof}
By (ii) of Theorem \ref{th1-1}, we obtain that
\[
\mathbb{\hat{E}}\left[  \sup_{t\leq T}|X_{t}^{\varepsilon}-X_{t}^{\ast}%
|^{\bar{p}}\right]  \leq C\mathbb{\hat{E}}\left[  \left(  \int_{0}^{T}%
(|\hat{b}_{t}|+|\hat{f}_{t}|)dt\right)  ^{\bar{p}}+\left(  \int_{0}^{T}%
|\hat{\sigma}_{t}|^{2}dt\right)  ^{\bar{p}/2}\right]  ,
\]
where the constant $C$ depends on $\bar{p}$, $T$, $\bar{\sigma}$,
$\underline{\sigma}$, $L_{1}$, $L_{2}$ and $L_{3}$, $\hat{b}_{t}%
=b(t,X_{t}^{\ast},Y_{t}^{\ast},u_{t}^{\varepsilon})-b(t,X_{t}^{\ast}%
,Y_{t}^{\ast},u_{t}^{\ast})$, $\hat{\sigma}_{t}=\sigma(t,X_{t}^{\ast}%
,Y_{t}^{\ast},u_{t}^{\varepsilon})-\sigma(t,X_{t}^{\ast},Y_{t}^{\ast}%
,u_{t}^{\ast})$ and $\hat{f}_{t}=f(t,X_{t}^{\ast},Y_{t}^{\ast},Z_{t}^{\ast
},u_{t}^{\varepsilon})-f(t,X_{t}^{\ast},Y_{t}^{\ast},Z_{t}^{\ast},u_{t}^{\ast
})$. It follows from the assumption (H2) that%
\[
|\hat{b}_{t}|\leq L_{4}\varepsilon|u_{t}-u_{t}^{\ast}|\text{, }|\hat{f}%
_{t}|\leq L_{4}\varepsilon|u_{t}-u_{t}^{\ast}|\text{ and }|\hat{\sigma}%
_{t}|\leq L_{4}\varepsilon|u_{t}-u_{t}^{\ast}|.
\]
Then we get%
\begin{align*}
\mathbb{\hat{E}}\left[  \sup_{t\leq T}|X_{t}^{\varepsilon}-X_{t}^{\ast}%
|^{\bar{p}}\right]   &  \leq C\varepsilon^{\bar{p}}\mathbb{\hat{E}}\left[
\left(  \int_{0}^{T}|u_{t}-u_{t}^{\ast}|dt\right)  ^{\bar{p}}+\left(  \int
_{0}^{T}|u_{t}-u_{t}^{\ast}|^{2}dt\right)  ^{\bar{p}/2}\right] \\
&  \leq C\varepsilon^{\bar{p}}\mathbb{\hat{E}}\left[  \left(  \int_{0}%
^{T}|u_{t}-u_{t}^{\ast}|^{2}dt\right)  ^{\bar{p}/2}\right]  ,
\end{align*}
where the constant $C$ depends on $\bar{p}$, $T$, $\bar{\sigma}$,
$\underline{\sigma}$, $L_{1}$, $L_{2}$, $L_{3}$ and $L_{4}$. Since%
\[
\mathbb{\hat{E}}\left[  \sup_{t\leq T}|X_{t}^{\varepsilon}-X_{t}^{\ast}%
|^{p}\right]  \leq \left(  \mathbb{\hat{E}}\left[  \sup_{t\leq T}%
|X_{t}^{\varepsilon}-X_{t}^{\ast}|^{\bar{p}}\right]  \right)  ^{p/\bar{p}},
\]
we obtain (\ref{e2-2}). Noticing $|\phi(X_{T}^{\varepsilon})-\phi(X_{T}^{\ast
})|\leq L_{3}|X_{T}^{\varepsilon}-X_{T}^{\ast}|$ and%
\[
|f(s,X_{s}^{\varepsilon},Y_{s}^{\ast},Z_{s}^{\ast},u_{s}^{\varepsilon
})-f(s,X_{s}^{\ast},Y_{s}^{\ast},Z_{s}^{\ast},u_{s}^{\ast})|\leq L_{3}%
|X_{s}^{\varepsilon}-X_{s}^{\ast}|+L_{4}\varepsilon|u_{s}-u_{s}^{\ast}|,
\]
we obtain%
\begin{align*}
|Y_{t}^{\varepsilon}-Y_{t}^{\ast}|^{p_{1}}  &  \leq C\mathbb{\hat{E}}%
_{t}\left[  \left(  |X_{T}^{\varepsilon}-X_{T}^{\ast}|+\int_{0}^{T}%
(|X_{s}^{\varepsilon}-X_{s}^{\ast}|+\varepsilon|u_{s}-u_{s}^{\ast}|)ds\right)
^{p_{1}}\right] \\
&  \leq C\mathbb{\hat{E}}_{t}\left[  \sup_{s\leq T}|X_{s}^{\varepsilon}%
-X_{s}^{\ast}|^{p_{1}}+\varepsilon^{p_{1}}\left(  \int_{0}^{T}|u_{s}%
-u_{s}^{\ast}|ds\right)  ^{p_{1}}\right]
\end{align*}
by (iii) of Theorem \ref{th1-1}, where $p_{1}=(2+\bar{p})/2$ and the constant
$C$ depends on $\bar{p}$, $T$, $\bar{\sigma}$, $\underline{\sigma}$, $L_{1}$,
$L_{3}$ and $L_{4}$. Then from Theorem \ref{Doob}, we obtain%
\begin{align*}
\mathbb{\hat{E}}\left[  \sup_{t\leq T}|Y_{t}^{\varepsilon}-Y_{t}^{\ast
}|^{p_{1}}\right]   &  \leq C\mathbb{\hat{E}}\left[  \sup_{t\leq
T}\mathbb{\hat{E}}_{t}\left[  \sup_{s\leq T}|X_{s}^{\varepsilon}-X_{s}^{\ast
}|^{p_{1}}+\varepsilon^{p_{1}}\left(  \int_{0}^{T}|u_{s}-u_{s}^{\ast
}|ds\right)  ^{p_{1}}\right]  \right] \\
&  \leq C\left(  \mathbb{\hat{E}}\left[  \left(  \sup_{s\leq T}|X_{s}%
^{\varepsilon}-X_{s}^{\ast}|^{p_{1}}+\varepsilon^{p_{1}}\left(  \int_{0}%
^{T}|u_{s}-u_{s}^{\ast}|ds\right)  ^{p_{1}}\right)  ^{\bar{p}/p_{1}}\right]
\right)  ^{p_{1}/\bar{p}}\\
&  \leq C\left[  \left(  \mathbb{\hat{E}}\left[  \sup_{s\leq T}|X_{s}%
^{\varepsilon}-X_{s}^{\ast}|^{\bar{p}}\right]  \right)  ^{p_{1}/\bar{p}%
}+\varepsilon^{p_{1}}\left(  \mathbb{\hat{E}}\left[  \left(  \int_{0}%
^{T}|u_{t}-u_{t}^{\ast}|^{2}dt\right)  ^{\bar{p}/2}\right]  \right)
^{p_{1}/\bar{p}}\right]  ,
\end{align*}
where the constant $C$ depends on $\bar{p}$, $T$, $\bar{\sigma}$,
$\underline{\sigma}$, $L_{1}$, $L_{3}$ and $L_{4}$. Notice
\[
\mathbb{\hat{E}}\left[  \sup_{t\leq T}|Y_{t}^{\varepsilon}-Y_{t}^{\ast
}|^{p^{\prime}}\right]  \leq \left(  \mathbb{\hat{E}}\left[  \sup_{t\leq
T}|Y_{t}^{\varepsilon}-Y_{t}^{\ast}|^{p_{1}}\right]  \right)  ^{p^{\prime
}/p_{1}}.
\]
Thus we can get (\ref{e2-3}) by (\ref{e2-2}).
\end{proof}

In the following lemma, we will study the estimation of $Z^{\varepsilon}-Z$.
For this purpose, set%
\begin{equation}
\mathcal{P}^{\ast}=\{P\in \mathcal{P}:E_{P}[K_{T}^{\ast}]=0\} \text{ and
}\mathcal{P}_{\varepsilon}^{\ast}=\{P\in \mathcal{P}:E_{P}[K_{T}^{\varepsilon
}]=0\}, \label{e2-4}%
\end{equation}
where $\mathcal{P}$ is defined in Theorem \ref{th-re-1}. Since $K_{T}^{\ast
}\leq0$, we have $K_{T}^{\ast}=0$ $P$-a.s. for each $P\in \mathcal{P}$.
Noticing that $\mathbb{\hat{E}}[K_{T}^{\ast}]=0$ and $\mathcal{P}$ is weakly
compact and convex, we can conclude that $\mathcal{P}^{\ast}$ is a nonempty
weakly compact and convex set.

\begin{lemma}
\label{le2-2}Suppose assumptions (H1)-(H3) hold. Let $u^{\ast}\in
\mathcal{U}[0,T]$ be an optimal control and let $u\in \mathcal{U}[0,T]$ be
given. Then there exists a constant $C$ depending on $\bar{p}$, $T$,
$\bar{\sigma}$, $\underline{\sigma}$, $L_{1}$, $L_{2}$, $L_{3}$, $L_{4}$, $u$
and $u^{\ast}$ such that for any $p\in \lbrack2,(2+\bar{p})/2]$,%
\begin{equation}
\mathbb{\hat{E}}\left[  \left(  \int_{0}^{T}|Z_{t}^{\varepsilon}-Z_{t}^{\ast
}|^{2}dt\right)  ^{p/2}\right]  \leq C\varepsilon^{p/2}, \label{e2-5}%
\end{equation}%
\begin{equation}
E_{P}\left[  \left(  \int_{0}^{T}|Z_{t}^{\varepsilon}-Z_{t}^{\ast}%
|^{2}dt\right)  ^{p/2}\right]  \leq C\varepsilon^{p}\text{, }E_{P_{\varepsilon
}}\left[  \left(  \int_{0}^{T}|Z_{t}^{\varepsilon}-Z_{t}^{\ast}|^{2}dt\right)
^{p/2}\right]  \leq C\varepsilon^{p}, \label{e2-6}%
\end{equation}
for each $P\in \mathcal{P}^{\ast}$ and $P_{\varepsilon}\in \mathcal{P}%
_{\varepsilon}^{\ast}$, where $\bar{p}\in(2,\beta)$ is defined in Theorem
\ref{th1-1}.
\end{lemma}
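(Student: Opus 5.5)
The plan is to apply the $G$-It\^{o} formula to $|\hat{Y}_t|^2$, where
\[
\hat{Y}=Y^{\varepsilon}-Y^{\ast},\quad \hat{Z}=Z^{\varepsilon}-Z^{\ast},\quad \hat{K}=K^{\varepsilon}-K^{\ast},
\]
on $[0,T]$. This gives
\[
|\hat{Y}_0|^2+\int_0^T|\hat{Z}_t|^2d\langle B\rangle_t=|\hat{Y}_T|^2-2\int_0^T\hat{Y}_t\hat{f}_tdt-2\int_0^T\hat{Y}_t\hat{Z}_tdB_t-2\int_0^T\hat{Y}_td\hat{K}_t ,
\]
where $\hat{f}_t$ is the difference of the generators. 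By (H2), $|\hat{f}_t|\le L_3|\hat{X}_t|+L_1(|\hat{Y}_t|+|\hat{Z}_t|)+L_4\varepsilon|u_t-u^\ast_t|$. Since $d\langle B\rangle_t\ge\underline{\sigma}^2dt$ q.s., the $\hat{Z}$ term on the right is absorbed into the left by Young's inequality. What remains is controlled by $\sup_t|\hat{Y}_t|^2$, $\sup_t|\hat{X}_t|^2$, $\varepsilon^2(\int_0^T|u_t-u_t^\ast|dt)^2$, the stochastic integral, and the cross term $\sup_t|\hat{Y}_t|\,(|K^\varepsilon_T|+|K^\ast_T|)$. This last bound uses that $K^\varepsilon$ and $K^\ast$ are monotone, so $|\int\hat{Y}d\hat{K}|\le\sup|\hat{Y}|(|K_T^\varepsilon|+|K^\ast_T|)$.

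For (\ref{e2-5}), raise the inequality to the power $p/2$ and take $\mathbb{\hat{E}}$. Treat the stochastic integral by BDG under $G$-expectation, giving $C\,\mathbb{\hat{E}}[(\sup|\hat{Y}|^2\int|\hat{Z}|^2)^{p/4}]$, and absorb it by Young. The $K$ cross term is only $O(\varepsilon)$ to first order: use H\"older with exponents $p_1/p$ style, together with Lemma \ref{le2-1} for $\sup|\hat{Y}|$ and the uniform bound $\mathbb{\hat{E}}[|K^\varepsilon_T|^{p_1}]\le C$. That uniform bound follows from the $L^{p}$ a priori estimates of \cite{Li} and the fact that $u^\varepsilon$ is bounded in $M_G^{2,\beta}$ uniformly in $\varepsilon$. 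The result is $\mathbb{\hat{E}}[(\int|\hat{Z}|^2)^{p/2}]\le C\varepsilon^{p/2}$. This is exactly why only $\varepsilon^{p/2}$ appears, and why $p\le(2+\bar p)/2$: that range is what is needed to apply Lemma \ref{le2-1}(\ref{e2-3}) and to keep H\"older exponents below $\bar p$.

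For (\ref{e2-6}), take $P\in\mathcal{P}^\ast$. Then $K^\ast\equiv0$ $P$-a.s. (as $K^\ast_T=0$ $P$-a.s. and $K^\ast$ is nonincreasing with $K^\ast_0=0$), so $-2\int\hat{Y}d\hat{K}=-2\int\hat{Y}dK^\varepsilon$. I would not bound this crudely. Instead, split according to the sign of $\hat{Y}$, or better, use that under $P$ the process $K^\varepsilon$ is a nonincreasing finite-variation part. Applying the same It\^o identity under $P$, where $\int\hat{Z}dB$ is a true $P$-martingale and BDG holds under $P$, the term $-2\int\hat{Y}_tdK^\varepsilon_t\le 2\sup|\hat{Y}|\,|K^\varepsilon_T|$ still appears. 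I expect this to be the main obstacle. The approach I would try is to write $K^\varepsilon_T=K^\varepsilon_T-K^\ast_T$ under $P$ and express it via the BSDE itself:
\[
K^\varepsilon_T=\hat{Y}_T-\hat{Y}_0-\int\hat{f}dt-\int\hat{Z}dB \quad P\text{-a.s.}
\]
This gives $E_P[|K^\varepsilon_T|^{p}]^{1/p}\le C(\varepsilon+E_P[(\int|\hat{Z}|^2)^{p/2}]^{1/p})$, using Lemma \ref{le2-1} and $E_P\le\mathbb{\hat{E}}$. Plugging this back yields
\[
E_P\big[\sup|\hat Y|\,|K^\varepsilon_T|\big]^{\cdot}\le C\varepsilon\big(\varepsilon+\|\hat{Z}\|_{P}\big),
\]
and Young's inequality absorbs $\|\hat Z\|_P$ into the left side, giving $E_P[(\int|\hat Z|^2)^{p/2}]\le C\varepsilon^p$. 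Doing this at the $p/2$ power needs some care with H\"older, which is where the restriction $p\le(2+\bar p)/2$ and the $p_1$-moment bound for $\hat{Y}$ enter. The statement for $P_\varepsilon\in\mathcal{P}^\ast_\varepsilon$ is symmetric: there $K^\varepsilon\equiv0$, and the same argument applies with $K^\ast$ in place of $K^\varepsilon$.
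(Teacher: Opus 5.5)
Your proposal is correct and follows the paper's route. For (\ref{e2-6}), the paper also applies It\^o's formula to $|\hat Y|^2$ under $P$ and uses $K^\ast=0$ $P$-a.s. It then bounds $E_P[|K_T^\varepsilon|^{p_1}]$ through $K_T^\varepsilon=\hat Y_T-\hat Y_0-\int_0^T(f_\varepsilon(t)-f(t))dt-\int_0^T\hat Z_t\,dB_t$ and closes with Young's inequality. The one difference is in (\ref{e2-5}): there the paper just cites Proposition 3.8 of \cite{HJPS1}, whose proof is the $G$-It\^o/BDG/H\"older computation with uniformly bounded $K$-moments that you carry out by hand.
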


\begin{proof}
Similar to the proof of Lemma \ref{le2-1}, we only need to prove the case
$p_{1}=(2+\bar{p})/2$. By Proposition 3.8 in \cite{HJPS1}, we have%
\[
\mathbb{\hat{E}}\left[  \left(  \int_{0}^{T}|Z_{t}^{\varepsilon}-Z_{t}^{\ast
}|^{2}dt\right)  ^{p_{1}/2}\right]  \leq C\left \{  \mathbb{\hat{E}}\left[
\sup_{t\leq T}|Y_{t}^{\varepsilon}-Y_{t}^{\ast}|^{p_{1}}\right]
+\Lambda^{1/2}\left(  \mathbb{\hat{E}}\left[  \sup_{t\leq T}|Y_{t}%
^{\varepsilon}-Y_{t}^{\ast}|^{p_{1}}\right]  \right)  ^{1/2}\right \}  ,
\]
where the constant $C$ depends on $\bar{p}$, $T$, $\underline{\sigma}$ and
$L_{1}$,%
\[
\Lambda=\mathbb{\hat{E}}\left[  \sup_{t\leq T}(|Y_{t}^{\varepsilon}|^{p_{1}%
}+|Y_{t}^{\ast}|^{p_{1}})+\left(  \int_{0}^{T}(|f(t,X_{t}^{\varepsilon
},0,0,u_{t}^{\varepsilon})|+|f(t,X_{t}^{\ast},0,0,u_{t}^{\ast})|)dt\right)
^{p_{1}}\right]  .
\]
One can check that $\Lambda$ is bounded by (\ref{e2-2}) and (\ref{e2-3}). Then
we obtain (\ref{e2-5}) by (\ref{e2-3}). For each $P\in \mathcal{P}^{\ast}$,
applying It\^{o}'s formula to $|Y_{t}^{\varepsilon}-Y_{t}^{\ast}|^{2}$ under
$P$, we obtain%
\[
\int_{0}^{T}|\hat{Z}_{t}^{\varepsilon}|^{2}d\langle B\rangle_{t}\leq|\hat
{Y}_{T}^{\varepsilon}|^{2}+2\int_{0}^{T}|\hat{Y}_{t}^{\varepsilon
}(f_{\varepsilon}(t)-f(t))|dt+2\left \vert \int_{0}^{T}\hat{Y}_{t}%
^{\varepsilon}dK_{t}^{\varepsilon}\right \vert +2\left \vert \int_{0}^{T}\hat
{Y}_{t}^{\varepsilon}\hat{Z}_{t}^{\varepsilon}dB_{t}\right \vert ,
\]
where $\hat{Z}_{t}^{\varepsilon}=Z_{t}^{\varepsilon}-Z_{t}^{\ast}$, $\hat
{Y}_{t}^{\varepsilon}=Y_{t}^{\varepsilon}-Y_{t}^{\ast}$, $f_{\varepsilon
}(t)=f(t,X_{t}^{\varepsilon},Y_{t}^{\varepsilon},Z_{t}^{\varepsilon}%
,u_{t}^{\varepsilon})$, $f(t)=f(t,X_{t}^{\ast},Y_{t}^{\ast},Z_{t}^{\ast}%
,u_{t}^{\ast})$, $\langle B\rangle$ is the quadratic variation process of $B$
satisfying $\underline{\sigma}^{2}dt\leq d\langle B\rangle_{t}\leq \bar{\sigma
}^{2}dt$ q.s. (see Corollary 3.5.5 in \cite{P2019}). Under the assumption
(H2), one can deduce%
\begin{align*}
&  E_{P}\left[  \left(  \int_{0}^{T}|\hat{Z}_{t}^{\varepsilon}|^{2}dt\right)
^{p_{1}/2}\right] \\
&  \leq CE_{P}\left[  \sup_{t\leq T}(|\hat{X}_{t}^{\varepsilon}|^{p_{1}}%
+|\hat{Y}_{t}^{\varepsilon}|^{p_{1}})+\varepsilon^{p_{1}}\left(  \int_{0}%
^{T}|u_{t}-u_{t}^{\ast}|^{2}dt\right)  ^{p_{1}/2}+|K_{T}^{\varepsilon}%
|^{p_{1}/2}\sup_{t\leq T}|\hat{Y}_{t}^{\varepsilon}|^{p_{1}/2}\right] \\
&  \leq C\mathbb{\hat{E}}\left[  \sup_{t\leq T}(|\hat{X}_{t}^{\varepsilon
}|^{p_{1}}+|\hat{Y}_{t}^{\varepsilon}|^{p_{1}})+\varepsilon^{p_{1}}\left(
\int_{0}^{T}|u_{t}-u_{t}^{\ast}|^{2}dt\right)  ^{p_{1}/2}\right]
+CE_{P}\left[  |K_{T}^{\varepsilon}|^{p_{1}/2}\sup_{t\leq T}|\hat{Y}%
_{t}^{\varepsilon}|^{p_{1}/2}\right]  ,
\end{align*}
where $\hat{X}_{t}^{\varepsilon}=X_{t}^{\varepsilon}-X_{t}^{\ast}$, the
constant $C$ depends on $\bar{p}$, $T$, $\underline{\sigma}$, $L_{1}$, $L_{3}$
and $L_{4}$. It follows from Lemma \ref{le2-1} that
\begin{equation}
E_{P}\left[  \left(  \int_{0}^{T}|\hat{Z}_{t}^{\varepsilon}|^{2}dt\right)
^{p_{1}/2}\right]  \leq C\varepsilon^{p_{1}}+CE_{P}\left[  |K_{T}%
^{\varepsilon}|^{p_{1}/2}\sup_{t\leq T}|\hat{Y}_{t}^{\varepsilon}|^{p_{1}%
/2}\right]  , \label{e2-7}%
\end{equation}
where the constant $C$ depends on $\bar{p}$, $T$, $\bar{\sigma}$,
$\underline{\sigma}$, $L_{1}$, $L_{2}$, $L_{3}$, $L_{4}$, $u$ and $u^{\ast}$.
On the other hand,%
\[
K_{T}^{\varepsilon}=\hat{Y}_{T}^{\varepsilon}-\hat{Y}_{0}^{\varepsilon}%
-\int_{0}^{T}(f_{\varepsilon}(t)-f(t))dt-\int_{0}^{T}\hat{Z}_{t}^{\varepsilon
}dB_{t}.
\]
From this and Lemma \ref{le2-1}, we can obtain%
\begin{equation}
E_{P}\left[  |K_{T}^{\varepsilon}|^{p_{1}}\right]  \leq C\varepsilon^{p_{1}%
}+CE_{P}\left[  \left(  \int_{0}^{T}|\hat{Z}_{t}^{\varepsilon}|^{2}dt\right)
^{p_{1}/2}\right]  , \label{e2-8}%
\end{equation}
where the constant $C$ depends on $\bar{p}$, $T$, $\bar{\sigma}$,
$\underline{\sigma}$, $L_{1}$, $L_{2}$, $L_{3}$, $L_{4}$, $u$ and $u^{\ast}$.
By (\ref{e2-7}), (\ref{e2-8}), $E_{P}\left[  \sup_{t\leq T}|\hat{Y}%
_{t}^{\varepsilon}|^{p_{1}}\right]  \leq \mathbb{\hat{E}}\left[  \sup_{t\leq
T}|\hat{Y}_{t}^{\varepsilon}|^{p_{1}}\right]  $ and $2ab\leq \lambda
a^{2}+\lambda^{-1}b^{2}$ for any $\lambda>0$, we can get $E_{P}\left[  \left(
\int_{0}^{T}|\hat{Z}_{t}^{\varepsilon}|^{2}dt\right)  ^{p_{1}/2}\right]  \leq
C\varepsilon^{p_{1}}$, where the constant $C$ depends on $\bar{p}$, $T$,
$\bar{\sigma}$, $\underline{\sigma}$, $L_{1}$, $L_{2}$, $L_{3}$, $L_{4}$, $u$
and $u^{\ast}$. Applying the same analysis to any $P_{\varepsilon}%
\in \mathcal{P}_{\varepsilon}^{\ast}$, we can obtain (\ref{e2-6}).
\end{proof}

Next, we present the variational inequality for cost functional, which is a
necessary condition for optimality. For this purpose, let $d(P,Q)$ denote the
Prohorov metric on $\mathcal{P}$. It is well known that $d(P_{n}%
,P)\rightarrow0$ as $n\rightarrow \infty$ if and only if $\{P_{n}:n\geq1\}$
converges weakly to $P$. For each $\alpha>0$, set%
\begin{equation}
\mathcal{P}^{\ast \alpha}=\{P\in \mathcal{P}:d(P,\mathcal{P}^{\ast})\leq
\alpha \},\label{e2-9}%
\end{equation}
where $d(P,\mathcal{P}^{\ast}):=\inf \{d(P,Q):Q\in \mathcal{P}^{\ast}\}$.
Consider the following adjoint equation under $P\in \mathcal{P}$:%
\begin{equation}
\left \{
\begin{array}
[c]{l}%
dl_{t}^{P}=[-f_{y}(t)l_{t}^{P}+b_{y}(t)p_{t}^{P}+\gamma_{t}\sigma_{y}%
(t)q_{t}^{P}]dt-\gamma_{t}^{-1}f_{z}(t)l_{t}^{P}dB_{t},\\
dp_{t}^{P}=[f_{x}(t)l_{t}^{P}-b_{x}(t)p_{t}^{P}-\gamma_{t}\sigma_{x}%
(t)q_{t}^{P}]dt+q_{t}^{P}dB_{t}+dN_{t}^{P},\\
l_{0}^{P}=1,\text{ }p_{T}^{P}=\phi_{x}(X_{T}^{\ast})l_{T}^{p},
\end{array}
\right.  \label{e2-10}%
\end{equation}
where $f_{y}(t)=f_{y}(t,X_{t}^{\ast},Y_{t}^{\ast},Z_{t}^{\ast},u_{t}^{\ast})$,
similar definitions for $b_{y}(t)$, $\sigma_{y}(t)$, $f_{z}(t)$, $f_{x}(t)$,
$b_{x}(t)$ and $\sigma_{x}(t)$, $d\langle B\rangle_{t}=\gamma_{t}dt$ q.s.,
$N^{P}$ is a square integrable martingale under $P$ such that $N^{P}$ is
orthogonal to $B$ and $N_{0}^{P}=0$. It follows from Theorem 6.1 in \cite{EH}
(see also the proof of Theorem 4.1 in \cite{Li}) that the adjoint equation
(\ref{e2-10}) under $P\in \mathcal{P}$ has a unique $(l^{P}$,$p^{P}$,$q^{P}%
$,$N^{P})$ satisfying $l^{P}$, $p^{P}\in S_{P}^{2}(0,T)$ and $q^{P}\in
M_{P}^{2}(0,T)$, where $S_{P}^{2}(0,T)$ is the completion of $S_{G}^{0}(0,T)$
under the norm $\left(  E_{P}[\sup_{t\leq T}|\eta_{t}|^{2}]\right)  ^{1/2}$
and $M_{P}^{2}(0,T)$ is the completion of $M_{G}^{0}(0,T)$ under the norm
$\left(  E_{P}\left[  \int_{0}^{T}|\eta_{t}|^{2}dt\right]  \right)  ^{1/2}$.
Furthermore, we have $l_{t}^{P}\geq0$ $P$-a.s. and%
\begin{equation}
\sup_{P\in \mathcal{P}}E_{P}\left[  \sup_{t\leq T}(|l_{t}^{P}|^{2}+|p_{t}%
^{P}|^{2})+\int_{0}^{T}|q_{t}^{P}|^{2}dt\right]  \leq C,\label{e2-11}%
\end{equation}
where the constant $C$ depends on $T$, $\bar{\sigma}$, $\underline{\sigma}$,
$L_{1}$, $L_{2}$ and $L_{3}$.

\begin{theorem}
\label{th2-3}Suppose assumptions (H1)-(H4) hold. Let $u^{\ast}\in
\mathcal{U}[0,T]$ be an optimal control. Then for each given $u\in
\mathcal{U}[0,T]$, we have%
\begin{equation}
\inf_{\alpha>0}\sup_{P\in \mathcal{P}^{\ast \alpha}}E_{P}\left[  \int_{0}%
^{T}(b_{v}(t)p_{t}^{P}+\gamma_{t}\sigma_{v}(t)q_{t}^{P}-f_{v}(t)l_{t}%
^{P})(u_{t}-u_{t}^{\ast})dt\right]  \geq0, \label{e2-new-12}%
\end{equation}
where $b_{v}(t)=(b_{v_{1}}(t,X_{t}^{\ast},Y_{t}^{\ast},u_{t}^{\ast}%
),\ldots,b_{v_{m}}(t,X_{t}^{\ast},Y_{t}^{\ast},u_{t}^{\ast}))$, similar
definitions for $\sigma_{v}(t)$ and $f_{v}(t)$, $\mathcal{P}^{\ast \alpha}$ is
defined in (\ref{e2-9}), $(l^{P}$,$p^{P}$,$q^{P}$,$N^{P})$ is the solution of
the adjoint equation (\ref{e2-10}) under $P$.
\end{theorem}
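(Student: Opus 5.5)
We start from optimality: $0\le J(u^{\varepsilon})-J(u^{\ast})=Y_{0}^{\varepsilon}-Y_{0}^{\ast}$ for each $\varepsilon\in(0,1]$. The aim is to show that $\varepsilon^{-1}(Y_0^\varepsilon-Y_0^\ast)$ is, up to $o(1)$, bounded above by $\sup_{P\in\mathcal{P}^{\ast\alpha}}E_P[\int_0^T H^P_t(u_t-u_t^\ast)dt]$, where $H^P_t=b_v(t)p_t^P+\gamma_t\sigma_v(t)q_t^P-f_v(t)l_t^P$. Letting $\varepsilon\to0$ and then taking the infimum over $\alpha$ gives (\ref{e2-new-12}).

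\textbf{Step 1: represent $Y_0^\varepsilon-Y_0^\ast$ as an expectation.} Since $K^{\varepsilon}$ is a $G$-martingale with $K_0^\varepsilon=0$, we have $Y_0^\varepsilon=\sup_{P}E_P[\phi(X_T^\varepsilon)-\int_0^T f_\varepsilon(t)dt-\int_0^T Z^\varepsilon_t dB_t]$, and the supremum is attained exactly on $\mathcal{P}^\ast_\varepsilon$. The same holds for $Y_0^\ast$ on $\mathcal{P}^\ast$. Fixing $P_\varepsilon\in\mathcal{P}^\ast_\varepsilon$, we get
\[Y_0^\varepsilon-Y_0^\ast\le E_{P_\varepsilon}[\hat Y_T^\varepsilon-\textstyle\int_0^T(f_\varepsilon(t)-f(t))dt]+E_{P_\varepsilon}[K_T^\ast],\]
where $E_{P_\varepsilon}[K^\ast_T]\le0$. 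Symmetrically, for $P\in\mathcal{P}^\ast$ we get the lower bound with $-E_P[K_T^\varepsilon]\ge0$. We only need the upper bound under $P_\varepsilon$. Under $P_\varepsilon$, $K^\varepsilon\equiv0$, so $(\hat X^\varepsilon,\hat Y^\varepsilon,\hat Z^\varepsilon)$ solves a classical coupled FBSDE under $P_\varepsilon$, driven by $K^\ast$ as an extra nonincreasing term.

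\textbf{Step 2: duality under $P_\varepsilon$.} We linearize the differences with mean-value coefficients, e.g.\ $\tilde b_x^\varepsilon(t)=\int_0^1 b_x(t,X^\ast+\theta\hat X^\varepsilon,\dots)d\theta$, and so on. Then we apply It\^o's formula under $P_\varepsilon$ to $p_t^{P_\varepsilon}\hat X_t^\varepsilon-l_t^{P_\varepsilon}\hat Y_t^\varepsilon$, using the adjoint equation (\ref{e2-10}) under $P_\varepsilon$. The adjoint is built so that the $x,y,z$ terms cancel with the unperturbed derivatives. We should obtain
\[Y_0^\varepsilon-Y_0^\ast\le\varepsilon E_{P_\varepsilon}\Big[\int_0^T H^{P_\varepsilon}_t(u_t-u^\ast_t)dt\Big]+R_\varepsilon+E_{P_\varepsilon}\Big[\int_0^T l_t^{P_\varepsilon}dK_t^\ast\Big].\]
The last term is $\le0$ because $l^{P}\ge0$ and $K^\ast$ is nonincreasing. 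The remainder $R_\varepsilon$ collects terms of the form $E_{P_\varepsilon}[\int(\tilde\psi^\varepsilon-\psi)(\cdot)\,(\hat X,\hat Y,\hat Z,\varepsilon(u-u^\ast))\cdot(l,p,q)dt]$, together with the terminal term $(\tilde\phi_x-\phi_x)\hat X_T l_T$. By H\"older, (\ref{e2-11}), Lemma \ref{le2-1}, the $P_\varepsilon$-bound (\ref{e2-6}) for $\hat Z^\varepsilon$ (which is of order $\varepsilon$, not $\varepsilon^{1/2}$) and (H4) with bounded derivatives, we get $R_\varepsilon=o(\varepsilon)$ uniformly. This uses that $\rho(|\hat X|+|\hat Y|+|\hat Z|+\varepsilon|u-u^\ast|)$ tends to $0$ in $\hat{\mathbb{E}}$-measure, combined with uniform integrability from the exponent margin $p_1>2$.

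\textbf{Step 3: $P_\varepsilon$ approaches $\mathcal{P}^\ast$.} We must show that for every $\alpha>0$, $P_\varepsilon\in\mathcal{P}^{\ast\alpha}$ for small $\varepsilon$. Suppose not. By weak compactness there is a subsequence $P_{\varepsilon_n}\to\bar P$ with $d(\bar P,\mathcal{P}^\ast)\ge\alpha$. But
\[E_{\bar P}[-K_T^\ast]=\lim_n E_{P_{\varepsilon_n}}[-K^\ast_T]\le\lim_n\hat{\mathbb{E}}[|K_T^\ast-K_T^{\varepsilon_n}|]=0.\]
The equality uses weak continuity of $P\mapsto E_P[\xi]$ for $\xi\in L_G^1$. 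The bound uses the identity for $K^\varepsilon_T-K^\ast_T$ in terms of $\hat Y,\hat Z,f_\varepsilon-f$, together with Lemmas \ref{le2-1} and \ref{le2-2} (\ref{e2-5}). Hence $\bar P\in\mathcal{P}^\ast$, a contradiction. Dividing Step 2 by $\varepsilon$ then gives $0\le\sup_{P\in\mathcal{P}^{\ast\alpha}}E_P[\int H^P(u-u^\ast)dt]+o(1)$. We let $\varepsilon\to0$ and take the infimum over $\alpha$.

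\textbf{Main obstacle.} The hardest part is Step 2's remainder estimate, specifically the cross terms involving $\hat Z^\varepsilon$ against $q^{P_\varepsilon}$ and the mean-value errors. These require that the estimates hold under the moving measure $P_\varepsilon$ with constants uniform in $\varepsilon$, which is why (\ref{e2-6}) is stated for $\mathcal{P}^\ast_\varepsilon$. I would first try to bound $E_{P_\varepsilon}[\int|\tilde f_z^\varepsilon-f_z||\hat Z^\varepsilon||l|dt]$ by Cauchy--Schwarz as $C\varepsilon\,(E_{P_\varepsilon}[\sup|l|^2\int\rho^2dt])^{1/2}$. Because $l$ is only in $L^2$, I would instead use the explicit exponential form of $l^P$, which gives higher moments uniformly in $P$. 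This controls the $\rho$-factor via convergence in capacity.
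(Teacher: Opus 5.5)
Your overall architecture is the paper's. You obtain the identity (\ref{e2-13}) from It\^o's formula under $P_\varepsilon\in\mathcal{P}^\ast_\varepsilon$ (where $K^\varepsilon\equiv0$), drop $E_{P_\varepsilon}[\int_0^T l_t^{P_\varepsilon}dK_t^\ast]\le0$, show the remainder is $o(\varepsilon)$, and show that $P_\varepsilon$ accumulates only on $\mathcal{P}^\ast$ via $\hat{\mathbb{E}}[|K_T^\varepsilon-K_T^\ast|]\to0$. Your contradiction argument in Step 3 is a clean version of the paper's subsequence argument.

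The problem is the step you single out as the main obstacle. There is no explicit exponential formula for $l^P$. In (\ref{e2-10}) the drift of $l^P$ is $-f_y(t)l_t^P+b_y(t)p_t^P+\gamma_t\sigma_y(t)q_t^P$, so $l^P$ is coupled to $(p^P,q^P)$ through the very coupling $b_y,\sigma_y\neq0$ that defines the problem, and it is not a stochastic exponential. All you have is $l^P\ge0$ and the $L^2$ bound (\ref{e2-11}). Uniform-in-$P$ higher moments of $l^P$ would need a separate $L^p$-theory ($p>2$) for the linear coupled adjoint system, which is not available. Your first attempt needs $E_{P_\varepsilon}[\sup_t|l_t^{P_\varepsilon}|^2\int_0^T\rho^2dt]\to0$, i.e. uniform integrability of $\sup_t|l^{P_\varepsilon}_t|^2$ under the moving $P_\varepsilon$, and that is not available either. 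So the remainder estimate does not go through as you propose it.

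The fix, which is what the paper does, is to let $\hat Z^\varepsilon$ (or $\hat X^\varepsilon,\hat Y^\varepsilon$) carry the extra integrability rather than $l$. Split
\[
\rho(\cdot)\wedge C\le\rho(4\varepsilon_0)+C\big(\mathbf{1}_{\{|\hat X_t^\varepsilon|>\varepsilon_0\}}+\cdots\big).
\]
The $\rho(4\varepsilon_0)$-part is at most $C\rho(4\varepsilon_0)\varepsilon$ by Cauchy--Schwarz, (\ref{e2-11}) and Lemmas \ref{le2-1}, \ref{le2-2}. For the indicator part one has, for example,
\[
E_{P_\varepsilon}\Big[\int_0^T l_t^{P_\varepsilon}|\hat Z_t^\varepsilon|\mathbf{1}_{\{|\hat Z_t^\varepsilon|>\varepsilon_0\}}dt\Big]\le C\Big(E_{P_\varepsilon}\Big[\Big(\int_0^T|\hat Z_t^\varepsilon|^2dt\Big)^{p_1/2}\Big]\Big)^{1/p_1}\Big(E_{P_\varepsilon}\Big[\int_0^T\mathbf{1}_{\{|\hat Z_t^\varepsilon|>\varepsilon_0\}}dt\Big]\Big)^{\frac{p_1-2}{2p_1}}\le C\varepsilon_0^{\frac{2-p_1}{p_1}}\varepsilon^{\frac{2p_1-2}{p_1}}.
\]
This uses only $E_{P_\varepsilon}[\sup_t|l_t^{P_\varepsilon}|^2]\le C$ together with (\ref{e2-6}) for $p_1=(2+\bar p)/2>2$. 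Then let $\varepsilon\downarrow0$ and afterwards $\varepsilon_0\downarrow0$.

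The terms $\varepsilon\,\gamma_t q_t^{P_\varepsilon}(u_t-u_t^\ast)$ multiplied by the $\rho$-factor are not covered by any exponent margin. Here $q^{P}$ lies only in $M_P^2$, and $u-u^\ast$ has no extra integrability jointly in $(t,\omega)$. For these you need $\hat{\mathbb{E}}[\int_0^T|u_t-u_t^\ast|^2\mathbf{1}_{A_\varepsilon}dt]\to0$ whenever $\hat{\mathbb{E}}[\int_0^T\mathbf{1}_{A_\varepsilon}dt]\to0$. This is the uniform integrability of elements of $M_G^2$ (Lemma 4.2 in \cite{HJ1}), not a consequence of $p_1>2$.
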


\begin{proof}
Set $\hat{X}_{t}^{\varepsilon}=X_{t}^{\varepsilon}-X_{t}^{\ast}$, $\hat{Y}%
_{t}^{\varepsilon}=Y_{t}^{\varepsilon}-Y_{t}^{\ast}$, $\hat{Z}_{t}%
^{\varepsilon}=Z_{t}^{\varepsilon}-Z_{t}^{\ast}$ for $\varepsilon \in(0,1]$, we
have%
\begin{equation}
\left \{
\begin{array}
[c]{rl}%
d\hat{X}_{t}^{\varepsilon}= & [b_{x}(t)\hat{X}_{t}^{\varepsilon}+b_{y}%
(t)\hat{Y}_{t}^{\varepsilon}+b_{v}(t)\varepsilon(u_{t}-u_{t}^{\ast}%
)+I_{b}^{\varepsilon}(t)]dt\\
& +[\sigma_{x}(t)\hat{X}_{t}^{\varepsilon}+\sigma_{y}(t)\hat{Y}_{t}%
^{\varepsilon}+\sigma_{v}(t)\varepsilon(u_{t}-u_{t}^{\ast})+I_{\sigma
}^{\varepsilon}(t)]dB_{t},\\
d\hat{Y}_{t}^{\varepsilon}= & [f_{x}(t)\hat{X}_{t}^{\varepsilon}+f_{y}%
(t)\hat{Y}_{t}^{\varepsilon}+f_{z}(t)\hat{Z}_{t}^{\varepsilon}+f_{v}%
(t)\varepsilon(u_{t}-u_{t}^{\ast})+I_{f}^{\varepsilon}(t)]dt+\hat{Z}%
_{t}^{\varepsilon}dB_{t}+dK_{t}^{\varepsilon}-dK_{t}^{\ast},\\
\hat{X}_{0}^{\varepsilon}= & 0\text{, }\hat{Y}_{T}^{\varepsilon}=\phi
_{x}(X_{T}^{\ast})\hat{X}_{T}^{\varepsilon}+I_{\phi}^{\varepsilon}(T),
\end{array}
\right.  \label{e2-12}%
\end{equation}
where $b_{x}(\lambda,t)=b_{x}(t,X_{t}^{\ast}+\lambda \hat{X}_{t}^{\varepsilon
},Y_{t}^{\ast}+\lambda \hat{Y}_{t}^{\varepsilon},u_{t}^{\ast}+\lambda
\varepsilon(u_{t}-u_{t}^{\ast}))$ for $\lambda \in \lbrack0,1]$ and
$b_{x}(t)=b_{x}(0,t)$, similar definitions for $b_{y}(\lambda,t)$,
$b_{v}(\lambda,t)$, $\sigma_{x}(\lambda,t)$, $\sigma_{y}(\lambda,t)$,
$\sigma_{v}(\lambda,t)$, $f_{x}(\lambda,t)$, $f_{y}(\lambda,t)$,
$f_{z}(\lambda,t)$ and $f_{v}(\lambda,t)$,%
\[%
\begin{array}
[c]{rl}%
I_{b}^{\varepsilon}(t)= & \int_{0}^{1}[b_{x}(\lambda,t)-b_{x}(t)]d\lambda
\hat{X}_{t}^{\varepsilon}+\int_{0}^{1}[b_{y}(\lambda,t)-b_{y}(t)]d\lambda
\hat{Y}_{t}^{\varepsilon}+\int_{0}^{1}[b_{v}(\lambda,t)-b_{v}(t)]d\lambda
\varepsilon(u_{t}-u_{t}^{\ast}),\\
I_{\sigma}^{\varepsilon}(t)= & \int_{0}^{1}[\sigma_{x}(\lambda,t)-\sigma
_{x}(t)]d\lambda \hat{X}_{t}^{\varepsilon}+\int_{0}^{1}[\sigma_{y}%
(\lambda,t)-\sigma_{y}(t)]d\lambda \hat{Y}_{t}^{\varepsilon}+\int_{0}%
^{1}[\sigma_{v}(\lambda,t)-\sigma_{v}(t)]d\lambda \varepsilon(u_{t}-u_{t}%
^{\ast}),\\
I_{f}^{\varepsilon}(t)= & \int_{0}^{1}[f_{x}(\lambda,t)-f_{x}(t)]d\lambda
\hat{X}_{t}^{\varepsilon}+\int_{0}^{1}[f_{y}(\lambda,t)-f_{y}(t)]d\lambda
\hat{Y}_{t}^{\varepsilon}+\int_{0}^{1}[f_{z}(\lambda,t)-f_{z}(t)]d\lambda
\hat{Z}_{t}^{\varepsilon}\\
& +\int_{0}^{1}[f_{v}(\lambda,t)-f_{v}(t)]d\lambda \varepsilon(u_{t}%
-u_{t}^{\ast}),\\
I_{\phi}^{\varepsilon}(T)= & \int_{0}^{1}[\phi_{x}(X_{T}^{\ast}+\lambda \hat
{X}_{T}^{\varepsilon})-\phi_{x}(X_{T}^{\ast})]d\lambda \hat{X}_{T}%
^{\varepsilon}.
\end{array}
\]
For each given $P_{\varepsilon}\in \mathcal{P}_{\varepsilon}^{\ast}$, applying
It\^{o}'s formula to $l_{t}^{P_{\varepsilon}}\hat{Y}_{t}^{\varepsilon}%
-p_{t}^{P_{\varepsilon}}\hat{X}_{t}^{\varepsilon}$ under $P_{\varepsilon}$, we
can get%
\begin{equation}%
\begin{array}
[c]{rl}%
\hat{Y}_{0}^{\varepsilon}= & \varepsilon E_{P_{\varepsilon}}\left[  \int
_{0}^{T}(b_{v}(t)p_{t}^{P_{\varepsilon}}+\gamma_{t}\sigma_{v}(t)q_{t}%
^{P_{\varepsilon}}-f_{v}(t)l_{t}^{P_{\varepsilon}})(u_{t}-u_{t}^{\ast
})dt\right] \\
& +E_{P_{\varepsilon}}\left[  l_{T}^{P_{\varepsilon}}I_{\phi}^{\varepsilon
}(T)+\int_{0}^{T}(p_{t}^{P_{\varepsilon}}I_{b}^{\varepsilon}(t)+\gamma
_{t}q_{t}^{P_{\varepsilon}}I_{\sigma}^{\varepsilon}(t)-l_{t}^{P_{\varepsilon}%
}I_{f}^{\varepsilon}(t))dt+\int_{0}^{T}l_{t}^{P_{\varepsilon}}dK_{t}^{\ast
}\right]  .
\end{array}
\label{e2-13}%
\end{equation}
In the following proof, we first prove%
\begin{equation}
\lim_{\varepsilon \downarrow0}\varepsilon^{-1}E_{P_{\varepsilon}}\left[
\left \vert l_{T}^{P_{\varepsilon}}I_{\phi}^{\varepsilon}(T)+\int_{0}^{T}%
(p_{t}^{P_{\varepsilon}}I_{b}^{\varepsilon}(t)+\gamma_{t}q_{t}^{P_{\varepsilon
}}I_{\sigma}^{\varepsilon}(t)-l_{t}^{P_{\varepsilon}}I_{f}^{\varepsilon
}(t))dt\right \vert \right]  =0. \label{e2-14}%
\end{equation}
Under the assumptions (H2) and (H4), we have $|I_{\phi}^{\varepsilon}%
(T)|\leq \lbrack \rho(|\hat{X}_{T}^{\varepsilon}|)\wedge C]|\hat{X}%
_{T}^{\varepsilon}|$,
\[
|I_{b}^{\varepsilon}(t)|+|I_{\sigma}^{\varepsilon}(t)|\leq \lbrack \rho(|\hat
{X}_{t}^{\varepsilon}|+|\hat{Y}_{t}^{\varepsilon}|+\varepsilon|u_{t}%
-u_{t}^{\ast}|)\wedge C](|\hat{X}_{t}^{\varepsilon}|+|\hat{Y}_{t}%
^{\varepsilon}|+\varepsilon|u_{t}-u_{t}^{\ast}|),
\]%
\[
|I_{f}^{\varepsilon}(t)|\leq \lbrack \rho(|\hat{X}_{t}^{\varepsilon}|+|\hat
{Y}_{t}^{\varepsilon}|+|\hat{Z}_{t}^{\varepsilon}|+\varepsilon|u_{t}%
-u_{t}^{\ast}|)\wedge C](|\hat{X}_{t}^{\varepsilon}|+|\hat{Y}_{t}%
^{\varepsilon}|+|\hat{Z}_{t}^{\varepsilon}|+\varepsilon|u_{t}-u_{t}^{\ast}|),
\]
where the constant $C$ depends on $L_{1}$, $L_{2}$, $L_{3}$ and $L_{4}$. For
each fixed $\varepsilon_{0}>0$, we have $\rho(|\hat{X}_{T}^{\varepsilon
}|)\wedge C\leq \rho(\varepsilon_{0})+CI_{\{|\hat{X}_{T}^{\varepsilon
}|>\varepsilon_{0}\}}$,%
\[
\rho(|\hat{X}_{t}^{\varepsilon}|+|\hat{Y}_{t}^{\varepsilon}|+\varepsilon
|u_{t}-u_{t}^{\ast}|)\wedge C\leq \rho(3\varepsilon_{0})+C\left \{
I_{\{|\hat{X}_{t}^{\varepsilon}|>\varepsilon_{0}\}}+I_{\{|\hat{Y}%
_{t}^{\varepsilon}|>\varepsilon_{0}\}}+I_{\{ \varepsilon|u_{t}-u_{t}^{\ast
}|>\varepsilon_{0}\}}\right \}  ,
\]%
\[
\rho(|\hat{X}_{t}^{\varepsilon}|+|\hat{Y}_{t}^{\varepsilon}|+|\hat{Z}%
_{t}^{\varepsilon}|+\varepsilon|u_{t}-u_{t}^{\ast}|)\wedge C\leq
\rho(4\varepsilon_{0})+C\left \{  I_{\{|\hat{X}_{t}^{\varepsilon}%
|>\varepsilon_{0}\}}+I_{\{|\hat{Y}_{t}^{\varepsilon}|>\varepsilon_{0}%
\}}+I_{\{|\hat{Z}_{t}^{\varepsilon}|>\varepsilon_{0}\}}+I_{\{ \varepsilon
|u_{t}-u_{t}^{\ast}|>\varepsilon_{0}\}}\right \}  .
\]
Set $p_{1}=(2+\bar{p})/2$, where $\bar{p}\in(2,\beta)$ is defined in Theorem
\ref{th1-1}. By (\ref{e2-11}) and Lemmas \ref{le2-1} and \ref{le2-2}, we have
\begin{align*}
E_{P_{\varepsilon}}\left[  \left \vert l_{T}^{P_{\varepsilon}}I_{\phi
}^{\varepsilon}(T)\right \vert \right]   &  \leq \rho(\varepsilon_{0}%
)E_{P_{\varepsilon}}\left[  l_{T}^{P_{\varepsilon}}|\hat{X}_{T}^{\varepsilon
}|\right]  +CE_{P_{\varepsilon}}\left[  l_{T}^{P_{\varepsilon}}|\hat{X}%
_{T}^{\varepsilon}|I_{\{|\hat{X}_{T}^{\varepsilon}|>\varepsilon_{0}\}}\right]
\\
&  \leq \left(  E_{P_{\varepsilon}}\left[  |l_{T}^{P_{\varepsilon}}%
|^{2}\right]  \right)  ^{1/2}\left[  \rho(\varepsilon_{0})\left(
E_{P_{\varepsilon}}\left[  |\hat{X}_{T}^{\varepsilon}|^{2}\right]  \right)
^{1/2}+C\left(  E_{P_{\varepsilon}}\left[  |\hat{X}_{T}^{\varepsilon}%
|^{2}I_{\{|\hat{X}_{T}^{\varepsilon}|>\varepsilon_{0}\}}\right]  \right)
^{1/2}\right] \\
&  \leq C\rho(\varepsilon_{0})\varepsilon+C\left(  E_{P_{\varepsilon}}\left[
|\hat{X}_{T}^{\varepsilon}|^{2}I_{\{|\hat{X}_{T}^{\varepsilon}|>\varepsilon
_{0}\}}\right]  \right)  ^{1/2}\\
&  \leq C\rho(\varepsilon_{0})\varepsilon+C\left(  E_{P_{\varepsilon}}\left[
|\hat{X}_{T}^{\varepsilon}|^{2}|\hat{X}_{T}^{\varepsilon}|^{p_{1}%
-2}\varepsilon_{0}^{2-p_{1}}\right]  \right)  ^{1/2}\\
&  \leq C\rho(\varepsilon_{0})\varepsilon+C\varepsilon_{0}^{(2-p_{1}%
)/2}\varepsilon^{p_{1}/2},
\end{align*}%
\begin{align*}
E_{P_{\varepsilon}}\left[  \int_{0}^{T}|q_{t}^{P_{\varepsilon}}||\hat{X}%
_{t}^{\varepsilon}|I_{\{|\hat{Y}_{t}^{\varepsilon}|>\varepsilon_{0}%
\}}dt\right]   &  \leq \left(  E_{P_{\varepsilon}}\left[  \int_{0}^{T}%
|q_{t}^{P_{\varepsilon}}|^{2}dt\right]  \right)  ^{1/2}\left(
E_{P_{\varepsilon}}\left[  \int_{0}^{T}|\hat{X}_{t}^{\varepsilon}%
|^{2}I_{\{|\hat{Y}_{t}^{\varepsilon}|>\varepsilon_{0}\}}dt\right]  \right)
^{1/2}\\
&  \leq C\left(  E_{P_{\varepsilon}}\left[  \int_{0}^{T}|\hat{X}%
_{t}^{\varepsilon}|^{p_{1}}dt\right]  \right)  ^{1/p_{1}}\left(
E_{P_{\varepsilon}}\left[  \int_{0}^{T}I_{\{|\hat{Y}_{t}^{\varepsilon
}|>\varepsilon_{0}\}}dt\right]  \right)  ^{(p_{1}-2)/2p_{1}}\\
&  \leq C\varepsilon \left(  E_{P_{\varepsilon}}\left[  \int_{0}^{T}|\hat
{Y}_{t}^{\varepsilon}|^{2}\varepsilon_{0}^{-2}dt\right]  \right)
^{(p_{1}-2)/2p_{1}}\\
&  \leq C\varepsilon_{0}^{(2-p_{1})/p_{1}}\varepsilon^{(2p_{1}-2)/p_{1}},
\end{align*}%
\begin{align*}
E_{P_{\varepsilon}}\left[  \varepsilon \int_{0}^{T}|q_{t}^{P_{\varepsilon}%
}||u_{t}-u_{t}^{\ast}|I_{\{|\hat{Y}_{t}^{\varepsilon}|>\varepsilon_{0}%
\}}dt\right]   &  \leq C\varepsilon \left(  E_{P_{\varepsilon}}\left[  \int
_{0}^{T}|u_{t}-u_{t}^{\ast}|^{2}I_{\{|\hat{Y}_{t}^{\varepsilon}|>\varepsilon
_{0}\}}dt\right]  \right)  ^{1/2}\\
&  \leq C\varepsilon \left(  \mathbb{\hat{E}}\left[  \int_{0}^{T}|u_{t}%
-u_{t}^{\ast}|^{2}I_{\{|\hat{Y}_{t}^{\varepsilon}|>\varepsilon_{0}%
\}}dt\right]  \right)  ^{1/2},
\end{align*}%
\begin{align*}
E_{P_{\varepsilon}}\left[  \int_{0}^{T}l_{t}^{P_{\varepsilon}}|\hat{Z}%
_{t}^{\varepsilon}|I_{\{|\hat{Z}_{t}^{\varepsilon}|>\varepsilon_{0}%
\}}dt\right]   &  \leq \left(  E_{P_{\varepsilon}}\left[  \sup_{t\leq T}%
|l_{t}^{P_{\varepsilon}}|^{2}\right]  \right)  ^{1/2}\left(  E_{P_{\varepsilon
}}\left[  \left(  \int_{0}^{T}|\hat{Z}_{t}^{\varepsilon}|I_{\{|\hat{Z}%
_{t}^{\varepsilon}|>\varepsilon_{0}\}}dt\right)  ^{2}\right]  \right)
^{1/2}\\
&  \leq C\left(  E_{P_{\varepsilon}}\left[  \left(  \int_{0}^{T}|\hat{Z}%
_{t}^{\varepsilon}|^{2}dt\right)  \left(  \int_{0}^{T}I_{\{|\hat{Z}%
_{t}^{\varepsilon}|>\varepsilon_{0}\}}dt\right)  \right]  \right)  ^{1/2}\\
&  \leq C\left(  E_{P_{\varepsilon}}\left[  \left(  \int_{0}^{T}|\hat{Z}%
_{t}^{\varepsilon}|^{2}dt\right)  ^{p_{1}/2}\right]  \right)  ^{1/p_{1}%
}\left(  E_{P_{\varepsilon}}\left[  \int_{0}^{T}I_{\{|\hat{Z}_{t}%
^{\varepsilon}|>\varepsilon_{0}\}}dt\right]  \right)  ^{(p_{1}-2)/2p_{1}}\\
&  \leq C\varepsilon \left(  E_{P_{\varepsilon}}\left[  \int_{0}^{T}|\hat
{Z}_{t}^{\varepsilon}|^{2}\varepsilon_{0}^{-2}dt\right]  \right)
^{(p_{1}-2)/2p_{1}}\\
&  \leq C\varepsilon_{0}^{(2-p_{1})/p_{1}}\varepsilon^{(2p_{1}-2)/p_{1}},
\end{align*}
where the constant $C$ depends on $\bar{p}$, $T$, $\bar{\sigma}$,
$\underline{\sigma}$, $L_{1}$, $L_{2}$, $L_{3}$, $L_{4}$, $u$ and $u^{\ast}$.
The other terms in (\ref{e2-14}) can be estimated using a similar method, and
thus we can obtain%
\begin{equation}%
\begin{array}
[c]{l}%
\varepsilon^{-1}E_{P_{\varepsilon}}\left[  \left \vert l_{T}^{P_{\varepsilon}%
}I_{\phi}^{\varepsilon}(T)+\int_{0}^{T}(p_{t}^{P_{\varepsilon}}I_{b}%
^{\varepsilon}(t)+\gamma_{t}q_{t}^{P_{\varepsilon}}I_{\sigma}^{\varepsilon
}(t)-l_{t}^{P_{\varepsilon}}I_{f}^{\varepsilon}(t))dt\right \vert \right] \\
\leq C[\rho(\varepsilon_{0})+\rho(3\varepsilon_{0})+\rho(4\varepsilon
_{0})]+C\varepsilon_{0}^{(2-p_{1})/2}\varepsilon^{(p_{1}-2)/2}+C\varepsilon
_{0}^{(2-p_{1})/p_{1}}\varepsilon^{(p_{1}-2)/p_{1}}\\
\  \  \ +C\left(  \mathbb{\hat{E}}\left[  \int_{0}^{T}|u_{t}-u_{t}^{\ast}%
|^{2}I_{\{|\hat{X}_{t}^{\varepsilon}|>\varepsilon_{0}\} \cup \{|\hat{Y}%
_{t}^{\varepsilon}|>\varepsilon_{0}\} \cup \{|\hat{Z}_{t}^{\varepsilon
}|>\varepsilon_{0}\} \cup \{ \varepsilon|u_{t}-u_{t}^{\ast}|>\varepsilon_{0}%
\}}dt\right]  \right)  ^{1/2},
\end{array}
\label{e2-15}%
\end{equation}
where the constant $C$ depends on $\bar{p}$, $T$, $\bar{\sigma}$,
$\underline{\sigma}$, $L_{1}$, $L_{2}$, $L_{3}$, $L_{4}$, $u$ and $u^{\ast}$.
Since $u-u^{\ast}\in M_{G}^{2}(0,T;\mathbb{R}^{m})$ and%
\begin{align*}
&  \mathbb{\hat{E}}\left[  \int_{0}^{T}I_{\{|\hat{X}_{t}^{\varepsilon
}|>\varepsilon_{0}\} \cup \{|\hat{Y}_{t}^{\varepsilon}|>\varepsilon_{0}\}
\cup \{|\hat{Z}_{t}^{\varepsilon}|>\varepsilon_{0}\} \cup \{ \varepsilon
|u_{t}-u_{t}^{\ast}|>\varepsilon_{0}\}}dt\right] \\
&  \leq \mathbb{\hat{E}}\left[  \int_{0}^{T}\varepsilon_{0}^{-2}(|\hat{X}%
_{t}^{\varepsilon}|^{2}+|\hat{Y}_{t}^{\varepsilon}|^{2}+|\hat{Z}%
_{t}^{\varepsilon}|^{2}+\varepsilon^{2}|u_{t}-u_{t}^{\ast}|^{2})dt\right]
\rightarrow0
\end{align*}
as $\varepsilon \downarrow0$, we get $\mathbb{\hat{E}}\left[  \int_{0}%
^{T}|u_{t}-u_{t}^{\ast}|^{2}I_{\{|\hat{X}_{t}^{\varepsilon}|>\varepsilon_{0}\}
\cup \{|\hat{Y}_{t}^{\varepsilon}|>\varepsilon_{0}\} \cup \{|\hat{Z}%
_{t}^{\varepsilon}|>\varepsilon_{0}\} \cup \{ \varepsilon|u_{t}-u_{t}^{\ast
}|>\varepsilon_{0}\}}dt\right]  \rightarrow0$ as $\varepsilon \downarrow0$ by
Lemma 4.2 in \cite{HJ1}. Therefore, we obtain (\ref{e2-14}) by first taking
$\varepsilon \downarrow0$ and then $\varepsilon_{0}\downarrow0$ in
(\ref{e2-15}). Noticing $\int_{0}^{T}l_{t}^{P_{\varepsilon}}dK_{t}^{\ast}%
\leq0$ and $\hat{Y}_{0}^{\varepsilon}\geq0$, we obtain%
\begin{equation}
\liminf_{\varepsilon \downarrow0}E_{P_{\varepsilon}}\left[  \int_{0}^{T}%
(b_{v}(t)p_{t}^{P_{\varepsilon}}+\gamma_{t}\sigma_{v}(t)q_{t}^{P_{\varepsilon
}}-f_{v}(t)l_{t}^{P_{\varepsilon}})(u_{t}-u_{t}^{\ast})dt\right]  \geq0
\label{e2-16}%
\end{equation}
by (\ref{e2-13}) and (\ref{e2-14}). It follows from Lemmas \ref{le2-1} and
\ref{le2-2} that $\mathbb{\hat{E}}[|K_{t}^{\varepsilon}-K_{t}^{\ast
}|]\rightarrow0$ as $\varepsilon \downarrow0$. Then we obtain%
\begin{equation}
|E_{P_{\varepsilon}}[K_{t}^{\ast}]|=E_{P_{\varepsilon}}[K_{t}^{\varepsilon
}-K_{t}^{\ast}]\leq \mathbb{\hat{E}}[|K_{t}^{\varepsilon}-K_{t}^{\ast
}|]\rightarrow0 \label{e2-17}%
\end{equation}
as $\varepsilon \downarrow0$. Since $\mathcal{P}$ is weakly compact, there
exists a sequence $(\varepsilon_{n})_{n\geq1}$ such that $\varepsilon
_{n}\downarrow0$ as $n\rightarrow \infty$ and $\{P_{\varepsilon_{n}}:n\geq1\}$
converges weakly to $P\in \mathcal{P}$. By Lemma 29 in \cite{DHP11} and
(\ref{e2-17}), we have%
\[
E_{P}[K_{t}^{\ast}]=\lim_{n\rightarrow \infty}E_{P_{\varepsilon_{n}}}%
[K_{t}^{\ast}]=0,
\]
which implies $P\in \mathcal{P}^{\ast}$. Thus we obtain (\ref{e2-new-12}) by
(\ref{e2-16}) and $d(P_{\varepsilon_{n}},\mathcal{P}^{\ast})\rightarrow0$ as
$n\rightarrow \infty$.
\end{proof}

At the end of this section, we consider the following linear control system:%
\begin{equation}
\left \{
\begin{array}
[c]{rl}%
dX_{t}^{u}= & (a_{1}(t)X_{t}^{u}+b_{1}(t)Y_{t}^{u}+c_{1}(t)u_{t}+e_{1}(t))dt\\
& +(a_{2}(t)X_{t}^{u}+b_{2}(t)Y_{t}^{u}+c_{2}(t)u_{t}+e_{2}(t))dB_{t},\\
dY_{t}^{u}= & (a_{3}(t)X_{t}^{u}+b_{3}(t)Y_{t}^{u}+c_{3}(t)u_{t}%
+e_{3}(t))dt+Z_{t}^{u}dB_{t}+dK_{t}^{u},\\
X_{0}^{u}= & x_{0}\in \mathbb{R},\text{ }Y_{T}^{u}=A_{1}X_{T}^{u}+A_{2},\text{
}t\in \lbrack0,T],
\end{array}
\right.  \label{e2-18}%
\end{equation}
where $A_{1}$, $A_{2}\in \mathbb{R}$, $a_{i}$, $b_{i}$, $e_{i}:[0,T]\rightarrow
\mathbb{R}$, $c_{i}:[0,T]\rightarrow \mathbb{R}^{1\times m}$, for $i\leq3$.

\begin{proposition}
\label{pr2-4}Suppose assumptions (H1)-(H4) hold. Let $u^{\ast}\in
\mathcal{U}[0,T]$ be an optimal control for linear control system
(\ref{e2-18}). Then there exists a $P_{\ast}\in \mathcal{P}^{\ast}$ such that%
\begin{equation}
(b_{v}(t)p_{t}-f_{v}(t)l_{t})(v-u_{t}^{\ast})\geq0,\text{ }\forall v\in
U,\text{ }a.e.,\text{ }P_{\ast}-a.s., \label{e2-new-18}%
\end{equation}
where $\mathcal{P}^{\ast}$, $b_{v}(t)$ and $f_{v}(t)$ are the same as in
Theorem \ref{th2-3}, $(l$,$p)$ is the solution of the following ordinary
differential equation:%
\[
dl_{t}=[-b_{3}(t)l_{t}+b_{1}(t)p_{t}]dt,\text{ }dp_{t}=[a_{3}(t)l_{t}%
-a_{1}(t)p_{t}]dt,\text{ }l_{0}=1,\text{ }p_{T}=A_{1}l_{T}.
\]

\end{proposition}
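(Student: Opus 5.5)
For the linear system (\ref{e2-18}) the adjoint equation (\ref{e2-10}) decouples and becomes deterministic. The coefficients are deterministic, and $f_{z}=0$ (the driver $a_{3}X+b_{3}Y+c_{3}u+e_{3}$ does not contain $z$), $\sigma_{x}=a_{2}$, $\sigma_{y}=b_{2}$. So I would take the pair $(l,p)$ solving the linear ODE boundary value problem in the statement, together with $q^{P}=0$ and $N^{P}=0$.

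The first step is to check that this is indeed the unique solution of (\ref{e2-10}) under every $P\in\mathcal{P}$. With $f_{z}=0$ there is no $dB$ term in $l$. The $\gamma_{t}\sigma_{y}q$ and $\gamma_{t}\sigma_{x}q$ terms vanish when $q=0$. The terminal condition $p_{T}=A_{1}l_{T}$ is deterministic. Uniqueness, from Theorem 6.1 in \cite{EH} as quoted, then identifies $(l^{P},p^{P},q^{P},N^{P})=(l,p,0,0)$ for all $P$. Solvability of the ODE system is part of that same well-posedness result; alternatively, it is the deterministic case of the weak-coupling argument in \cite{Li}, since (H3) applies.

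The second step plugs this into Theorem \ref{th2-3}. The integrand becomes $g_{t}(u_{t}-u_{t}^{\ast})$ with $g_{t}:=b_{v}(t)p_{t}-f_{v}(t)l_{t}=c_{1}(t)p_{t}-c_{3}(t)l_{t}$. This $g$ is deterministic and does not depend on $P$, so for every $u\in\mathcal{U}[0,T]$
\[
\inf_{\alpha>0}\sup_{P\in\mathcal{P}^{\ast\alpha}}E_{P}\left[\int_{0}^{T}g_{t}(u_{t}-u_{t}^{\ast})dt\right]\geq0.
\]
The map $P\mapsto E_{P}[\xi]$ is weakly continuous on $\mathcal{P}$ for $\xi\in L_{G}^{1}(\Omega_{T})$ (Lemma 29 in \cite{DHP11}), and $\mathcal{P}$ is compact. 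Hence the infimum over $\alpha$ equals $\max_{P\in\mathcal{P}^{\ast}}E_{P}[\int_{0}^{T}g_{t}(u_{t}-u_{t}^{\ast})dt]$, and this maximum is nonnegative for every admissible $u$.

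The main obstacle is choosing a single $P_{\ast}$ that works for all $u$ simultaneously. I would first try a minimax argument. Set $F(u,P):=E_{P}[\int_{0}^{T}g_{t}(u_{t}-u_{t}^{\ast})dt]$. It is linear (hence concave) in $u$ over the convex set $\mathcal{U}[0,T]$, and linear and weakly continuous in $P$ over the convex compact set $\mathcal{P}^{\ast}$. By Sion's minimax theorem,
\[
\max_{P\in\mathcal{P}^{\ast}}\inf_{u}F(u,P)=\inf_{u}\max_{P\in\mathcal{P}^{\ast}}F(u,P)\geq0,
\]
so there is a $P_{\ast}\in\mathcal{P}^{\ast}$ with $E_{P_{\ast}}[\int_{0}^{T}g_{t}(u_{t}-u_{t}^{\ast})dt]\geq0$ for all $u\in\mathcal{U}[0,T]$.

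The final step localizes this inequality. For $v$ in a countable dense subset of $U$, I would take $u=v\mathbf{1}_{A}+u^{\ast}\mathbf{1}_{A^{c}}$, where $A$ runs over progressive sets generated by $\{\xi\geq0\}$-type sets with $\xi\in Lip(\Omega_{s})$ on intervals $[s,r)$. Such $u$ belong to $M_{G}^{2,\beta}$, possibly after approximating indicators by Lipschitz cutoffs and using convexity of $U$. This gives $g_{t}(v-u_{t}^{\ast})\geq0$, $dt\times dP_{\ast}$-a.e. A monotone class argument extends it from this generating family, and continuity in $v$ extends it from the dense set to all $v\in U$, which is (\ref{e2-new-18}). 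The delicate points are that these localized controls are quasi-continuous enough to lie in $\mathcal{U}[0,T]$, and the approximation passage under the single measure $P_{\ast}$ using dominated convergence.
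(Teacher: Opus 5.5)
Your proposal is correct and follows essentially the same route as the paper: you identify the adjoint solution as $(l,p,0,0)$, apply Theorem \ref{th2-3}, use Lemma 29 of \cite{DHP11} to pass from $\mathcal{P}^{\ast\alpha}$ to $\mathcal{P}^{\ast}$, and use Sion's minimax theorem to get a single $P_{\ast}$; the paper takes $P_{\ast}$ as a weak limit of $P_n$ with $\inf_u E_{P_n}[\xi(u)]\geq -n^{-1}$, which amounts to your attainment of $\max_{P}\inf_u F$. The only difference is the final localization, where the paper cites a Lusin-type density of $M_{G}^{2,\beta}(0,T;U)$ in $M_{P_{\ast}}^{2,\beta}(0,T;U)$ and you build the controls $u^{\ast}+\theta(v-u^{\ast})$ explicitly; in your monotone class step, make sure nonnegativity is established on an algebra (for example via all Lipschitz cutoffs of finite-dimensional marginals), because for a signed measure nonnegativity on a mere $\pi$-system does not propagate to the generated $\sigma$-algebra.
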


\begin{proof}
It is easy to verify that $(l_{t}^{P}$,$p_{t}^{P}$,$q_{t}^{P}$,$N_{t}%
^{P})_{t\leq T}=(l_{t}$,$p_{t}$,$0$,$0)_{t\leq T}$ satisfies the adjoint
equation (\ref{e2-10}) under each $P\in \mathcal{P}$. By Theorem \ref{th2-3},
we get%
\begin{equation}
\inf_{\alpha>0}\sup_{P\in \mathcal{P}^{\ast \alpha}}E_{P}\left[  \int_{0}%
^{T}(b_{v}(t)p_{t}-f_{v}(t)l_{t})(u_{t}-u_{t}^{\ast})dt\right]  \geq0\text{
for any }u\in \mathcal{U}[0,T]. \label{e2-19}%
\end{equation}
Since $\xi(u):=\int_{0}^{T}(b_{v}(t)p_{t}-f_{v}(t)l_{t})(u_{t}-u_{t}^{\ast
})dt\in L_{G}^{T}(\Omega_{1})$, by Lemma 29 in \cite{DHP11}, we obtain that
$E_{P}\left[  \xi(u)\right]  $ is a continuous function under the Prohorov
metric with respect to $P$. Thus we deduce%
\[
\inf_{u\in \mathcal{U}[0,T]}\sup_{P\in \mathcal{P}^{\ast}}E_{P}\left[
\xi(u)\right]  \geq0
\]
by (\ref{e2-19}). It is easy to verify that $E_{P}\left[  \int_{0}^{T}%
(b_{v}(t)p_{t}-f_{v}(t)l_{t})(u_{t}-u_{t}^{\ast})dt\right]  $ is concave in
$P$ and convex in $u$. Therefore, by Sion's minimax theorem, we obtain%
\begin{equation}
\sup_{P\in \mathcal{P}^{\ast}}\inf_{u\in \mathcal{U}[0,T]}E_{P}\left[
\xi(u)\right]  =\inf_{u\in \mathcal{U}[0,T]}\sup_{P\in \mathcal{P}^{\ast}}%
E_{P}\left[  \xi(u)\right]  \geq0. \label{e2-20}%
\end{equation}
Thus we can find a sequence of $\{P_{n}:n\geq1\} \subset \mathcal{P}^{\ast}$
such that $\{P_{n}:n\geq1\}$ converges weakly to $P_{\ast}\in \mathcal{P}%
^{\ast}$ and $\inf_{u\in \mathcal{U}[0,T]}E_{P_{n}}\left[  \xi(u)\right]
\geq-n^{-1}$. Note that $E_{P_{n}}\left[  \xi(u)\right]  \rightarrow
E_{P_{\ast}}\left[  \xi(u)\right]  $ as $n\rightarrow \infty$ for any
$u\in \mathcal{U}[0,T]$. Then we obtain $E_{P_{\ast}}\left[  \xi(u)\right]
\geq0$ for any $u\in \mathcal{U}[0,T]$. It follows from Lusin's theorem that
$M_{G}^{2,\beta}(0,T;U)$ is dense in $M_{P_{\ast}}^{2,\beta}(0,T;U)$ under the
norm $\left(  E_{P_{\ast}}\left[  \left(  \int_{0}^{T}|\eta_{t}|^{2}dt\right)
^{\beta/2}\right]  \right)  ^{1/\beta}$. Thus we obtain
\[
E_{P_{\ast}}\left[  \int_{0}^{T}(b_{v}(t)p_{t}-f_{v}(t)l_{t})(u_{t}%
-u_{t}^{\ast})dt\right]  \geq0\text{ for any }u\in M_{P_{\ast}}^{2,\beta
}(0,T;U),
\]
which implies (\ref{e2-new-18}).
\end{proof}

\section{Sufficient conditions for optimality}

In this section, we still use the notations given in Section 3. Define the
Hamiltonian $H:[0,T]\times \Omega_{T}\times \mathbb{R}^{3}\times \mathbb{R}%
_{+}\times \mathbb{R}^{2+m}\rightarrow \mathbb{R}$ as follows:
\begin{equation}
H(t,x,y,z,l,p,q,v):=b(t,x,y,v)p+\gamma_{t}\sigma(t,x,y,v)q-f(t,x,y,z,v)l.
\label{e3-1}%
\end{equation}
It is easy to check that%
\begin{equation}
H_{v}(t,X_{t}^{\ast},Y_{t}^{\ast},Z_{t}^{\ast},l_{t}^{P},p_{t}^{P},q_{t}%
^{P},u_{t}^{\ast})=b_{v}(t)p_{t}^{P}+\gamma_{t}\sigma_{v}(t)q_{t}^{P}%
-f_{v}(t)l_{t}^{P}. \label{e3-4}%
\end{equation}
The following theorem provides sufficient conditions for optimality.

\begin{theorem}
\label{th3-1}Suppose assumptions (H1)-(H4) hold. Let $u^{\ast}\in
\mathcal{U}[0,T]$ be given and let $(X^{\ast}$,$Y^{\ast}$,$Z^{\ast}$,$K^{\ast
})$ be the solution of (\ref{e1-1}) corresponding to $u^{\ast}$. Additionally,
suppose that $H(\cdot)$ defined in (\ref{e3-1}) is convex with respect to
$(x$,$y$,$z$,$v)$, $\phi$ is convex with respect to $x$ and%
\begin{equation}
\inf_{\alpha>0}\sup_{P\in \mathcal{P}^{\ast \alpha}}E_{P}\left[  \int_{0}%
^{T}(b_{v}(t)p_{t}^{P}+\gamma_{t}\sigma_{v}(t)q_{t}^{P}-f_{v}(t)l_{t}%
^{P})(u_{t}-u_{t}^{\ast})dt\right]  \geq0 \label{e3-2}%
\end{equation}
for any $u\in \mathcal{U}[0,T]$, where $\mathcal{P}^{\ast}$, $b_{v}(t)$,
$\sigma_{v}(t)$, $f_{v}(t)$ and $(l^{P}$,$p^{P}$,$q^{P}$,$N^{P})$ are the same
as in Theorem \ref{th2-3}. Then $u^{\ast}$ is an optimal control.
\end{theorem}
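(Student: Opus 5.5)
We will show directly that $J(u)\geq J(u^{\ast})$ for every $u\in\mathcal{U}[0,T]$. The idea is to compare the two solutions under each fixed $P\in\mathcal{P}$ by a duality argument with the adjoint process $(l^{P},p^{P},q^{P},N^{P})$ of (\ref{e2-10}), exactly as in the derivation of (\ref{e2-13}). The only differences are that convexity replaces the Taylor expansion, and that the $K$-terms are handled by their sign plus a continuity/compactness argument in $P$.

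\textbf{Step 1 (duality identity).} Fix $u$ and write $\hat X=X^{u}-X^{\ast}$, $\hat Y=Y^{u}-Y^{\ast}$, $\hat Z=Z^{u}-Z^{\ast}$. For any $P\in\mathcal{P}$, apply It\^{o}'s formula under $P$ to $l_t^{P}\hat Y_t-p_t^{P}\hat X_t$, using $d\langle B\rangle_t=\gamma_t dt$ and the orthogonality of $N^{P}$ to $B$. Since $\hat X_0=0$ and $l_0^P=1$, this gives
\[
\hat Y_0=E_P\Big[l_T^{P}\hat Y_T-p_T^{P}\hat X_T-\int_0^T(\cdots)dt-\int_0^T l_t^{P}\,dK_t^{u}+\int_0^T l_t^{P}\,dK_t^{\ast}\Big].
\]
Here the $dt$-integrand is
\[
l^{P}\big(f(t,X^{u},Y^{u},Z^{u},u)-f(t,X^{\ast},Y^{\ast},Z^{\ast},u^{\ast})\big)-p^{P}\big(b(\cdot,u)-b(\cdot,u^\ast)\big)-\gamma q^{P}\big(\sigma(\cdot,u)-\sigma(\cdot,u^\ast)\big)
\]
plus the adjoint drift terms multiplying $\hat X,\hat Y,\hat Z$. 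The stochastic integrals have zero $P$-expectation by the integrability in Theorem \ref{th1-1} and (\ref{e2-11}).

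\textbf{Step 2 (convexity).} Rewrite the $dt$-integrand as $-[H(t,\Theta^{u}_t,l^P,p^P,q^P,u_t)-H(t,\Theta^{\ast}_t,l^P,p^P,q^P,u^{\ast}_t)]$, where $\Theta=(X,Y,Z)$, and add the adjoint drift terms. The adjoint drifts are exactly $H_x\hat X+H_y\hat Y+H_z\hat Z$ evaluated along $u^\ast$. For example, the $\hat Z$ term comes from $d\langle l,\hat Y\rangle=-\gamma^{-1}f_z l\,\hat Z\,\gamma\,dt=H_z\hat Z\,dt$. Convexity of $H$ in $(x,y,z,v)$ then bounds the $dt$-part below by $E_P[\int_0^T H_v(t)(u_t-u_t^\ast)dt]$, with $H_v$ as in (\ref{e3-4}). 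Since $l_T^P\geq0$ and $\phi$ is convex,
\[
l_T^P\hat Y_T=l_T^P\big(\phi(X_T^u)-\phi(X_T^\ast)\big)\geq l_T^P\phi_x(X_T^\ast)\hat X_T=p_T^P\hat X_T .
\]
Also $-\int l^P dK^u\geq0$, because $l^P\geq0$ and $K^u$ is non-increasing. Altogether, for every $P\in\mathcal{P}$,
\[
J(u)-J(u^\ast)\geq E_P\Big[\int_0^T H_v(t)(u_t-u_t^\ast)dt\Big]-C\big(E_P[|K_T^\ast|^2]\big)^{1/2}.
\]
For the last term we use $|\int l^P dK^\ast|\leq \sup_t l^P_t\,|K^\ast_T|$, Cauchy--Schwarz and (\ref{e2-11}).

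\textbf{Step 3 (passing from $\mathcal{P}^{\ast\alpha}$).} Set $\eta(\alpha):=\sup_{P\in\mathcal{P}^{\ast\alpha}}E_P[|K_T^\ast|^2]$. Since $K_T^\ast\in L_G^{\bar p}$ with $\bar p>2$, we have $|K_T^\ast|^2\in L_G^1(\Omega_T)$, so by Lemma 29 in \cite{DHP11} the map $P\mapsto E_P[|K^\ast_T|^2]$ is continuous for the Prohorov metric. It vanishes on $\mathcal{P}^\ast$, and $\mathcal{P}$ is weakly compact. A standard argument then gives $\eta(\alpha)\to0$ as $\alpha\downarrow0$: otherwise pick $P_n\in\mathcal{P}^{\ast 1/n}$ with $E_{P_n}[|K^\ast_T|^2]\geq c>0$ and extract a limit in $\mathcal{P}^\ast$, a contradiction. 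Taking the supremum over $P\in\mathcal{P}^{\ast\alpha}$ in Step 2 gives
\[
J(u)-J(u^\ast)\geq\sup_{P\in\mathcal{P}^{\ast\alpha}}E_P\Big[\int_0^T H_v(t)(u_t-u^\ast_t)dt\Big]-C\eta(\alpha)^{1/2}.
\]
Letting $\alpha\downarrow0$ and using (\ref{e3-2}) yields $J(u)\geq J(u^\ast)$.

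The main obstacle is Step 2: the bookkeeping must match the adjoint drifts exactly with the partial derivatives of $H$, including the factor $\gamma_t$ and the $z$-term through the cross variation $\langle l^P,\hat Y\rangle$. The sign conventions must also come out so that the $K^u$ term has a favourable sign and only the $K^\ast$ term needs the $\mathcal{P}^{\ast\alpha}$ approximation.
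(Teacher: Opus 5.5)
Your proposal is correct and follows essentially the same route as the paper: It\^{o}'s formula for $l^{P}\hat{Y}-p^{P}\hat{X}$ under each $P$, convexity of $\phi$ and $H$ combined with $l^{P}\geq0$ and the monotonicity of $K^{u}$, and the bound $C(E_{P}[|K_{T}^{\ast}|^{2}])^{1/2}$ for the $K^{\ast}$-term. The only difference is cosmetic: you control that term uniformly on $\mathcal{P}^{\ast\alpha}$ through $\eta(\alpha)\rightarrow0$, whereas the paper extracts from the hypothesis a sequence $P_{n}$ converging weakly to some $P_{\ast}\in\mathcal{P}^{\ast}$ and passes to the limit along it; both rest on the same weak compactness of $\mathcal{P}$ and continuity of $P\mapsto E_{P}[|K_{T}^{\ast}|^{2}]$.
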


\begin{proof}
For each fixed $u\in \mathcal{U}[0,T]$, set $\tilde{X}_{t}=X_{t}^{u}%
-X_{t}^{\ast}$, $\tilde{Y}_{t}=Y_{t}^{u}-Y_{t}^{\ast}$, $\tilde{Z}_{t}%
=Z_{t}^{u}-Z_{t}^{\ast}$, we have%
\[
\left \{
\begin{array}
[c]{rl}%
d\tilde{X}_{t}= & [b_{x}(t)\tilde{X}_{t}+b_{y}(t)\tilde{Y}_{t}+\tilde
{b}(t)]dt+[\sigma_{x}(t)\tilde{X}_{t}+\sigma_{y}(t)\tilde{Y}_{t}+\tilde
{\sigma}(t)]dB_{t},\\
d\tilde{Y}_{t}= & [f_{x}(t)\tilde{X}_{t}+f_{y}(t)\tilde{Y}_{t}+f_{z}%
(t)\tilde{Z}_{t}+\tilde{f}(t)]dt+\tilde{Z}_{t}dB_{t}+dK_{t}^{u}-dK_{t}^{\ast
},\\
\tilde{X}_{0}= & 0\text{, }\tilde{Y}_{T}=\phi_{x}(X_{T}^{\ast})\tilde{X}%
_{T}+\tilde{\phi}(T),
\end{array}
\right.
\]
where $b_{x}(t)=b_{x}(t,X_{t}^{\ast},Y_{t}^{\ast},u_{t}^{\ast})$, similar
definitions for $b_{y}(t)$, $\sigma_{x}(t)$, $\sigma_{y}(t)$, $f_{x}(t)$,
$f_{y}(t)$ and $f_{z}(t)$, $\tilde{\phi}(T)=-\phi_{x}(X_{T}^{\ast})\tilde
{X}_{T}+\phi(X_{T}^{u})-\phi(X_{T}^{\ast})$,
\[%
\begin{array}
[c]{rl}%
\tilde{b}(t)= & -b_{x}(t)\tilde{X}_{t}-b_{y}(t)\tilde{Y}_{t}+b(t,X_{t}%
^{u},Y_{t}^{u},u_{t})-b(t,X_{t}^{\ast},Y_{t}^{\ast},u^{\ast}),\\
\tilde{\sigma}(t)= & -\sigma_{x}(t)\tilde{X}_{t}-\sigma_{y}(t)\tilde{Y}%
_{t}+\sigma(t,X_{t}^{u},Y_{t}^{u},u_{t})-\sigma(t,X_{t}^{\ast},Y_{t}^{\ast
},u^{\ast}),\\
\tilde{f}(t)= & -f_{x}(t)\tilde{X}_{t}-f_{y}(t)\tilde{Y}_{t}-f_{z}(t)\tilde
{Z}_{t}+f(t,X_{t}^{u},Y_{t}^{u},Z_{t}^{u},u_{t})-f(t,X_{t}^{\ast},Y_{t}^{\ast
},Z_{t}^{\ast},u^{\ast}).
\end{array}
\]
From (\ref{e3-2}), we can find a sequence of $\{P_{n}:n\geq1\} \subset
\mathcal{P}$ such that $\{P_{n}:n\geq1\}$ converges weakly to $P_{\ast}%
\in \mathcal{P}^{\ast}$ and%
\begin{equation}
\lim_{n\rightarrow \infty}E_{P_{n}}\left[  \int_{0}^{T}(b_{v}(t)p_{t}%
^{n}+\gamma_{t}\sigma_{v}(t)q_{t}^{n}-f_{v}(t)l_{t}^{n})(u_{t}-u_{t}^{\ast
})dt\right]  \geq0, \label{e3-5}%
\end{equation}
where $l_{t}^{n}=l_{t}^{P_{n}}$, $p_{t}^{n}=p_{t}^{P_{n}}$ and $q_{t}%
^{n}=q_{t}^{P_{n}}$. For each fixed $n\geq1$, applying It\^{o}'s formula to
$l_{t}^{n}\tilde{Y}_{t}-p_{t}^{n}\tilde{X}_{t}$ under $P_{n}$, we obtain%
\[
\tilde{Y}_{0}=E_{P_{n}}\left[  l_{T}^{n}\tilde{\phi}(T)+\int_{0}^{T}(\tilde
{b}(t)p_{t}^{n}+\gamma_{t}\tilde{\sigma}(t)q_{t}^{n}-\tilde{f}(t)l_{t}%
^{n})dt-\int_{0}^{T}l_{t}^{n}dK_{t}^{u}+\int_{0}^{T}l_{t}^{n}dK_{t}^{\ast
}\right]  .
\]
Since $\phi$ is convex with respect to $x$, we get $\tilde{\phi}(T)\geq0$.
Note that $l_{t}^{n}\geq0$ and $dK_{t}^{u}\leq0$ for $t\leq T$. Then we obtain%
\begin{equation}
\tilde{Y}_{0}\geq E_{P_{n}}\left[  \int_{0}^{T}(\tilde{b}(t)p_{t}^{n}%
+\gamma_{t}\tilde{\sigma}(t)q_{t}^{n}-\tilde{f}(t)l_{t}^{n})dt+\int_{0}%
^{T}l_{t}^{n}dK_{t}^{\ast}\right]  . \label{e3-3}%
\end{equation}
It is easy to verify that%
\[
\tilde{b}(t)p_{t}^{n}+\gamma_{t}\tilde{\sigma}(t)q_{t}^{n}-\tilde{f}%
(t)l_{t}^{n}=H(t,u)-H(t)-H_{x}(t)\tilde{X}_{t}-H_{y}(t)\tilde{Y}_{t}%
-H_{z}(t)\tilde{Z}_{t},
\]
where $H(t,u)=H(t$,$X_{t}^{u}$,$Y_{t}^{u}$,$Z_{t}^{u}$,$l_{t}^{n}$,$p_{t}^{n}%
$,$q_{t}^{n}$,$u_{t})$, $H(t)=H(t$,$X_{t}^{\ast}$,$Y_{t}^{\ast}$,$Z_{t}^{\ast
}$,$l_{t}^{n}$,$p_{t}^{n}$,$q_{t}^{n}$,$u_{t}^{\ast})$, similar definitions
for $H_{x}(t)$, $H_{y}(t)$, $H_{z}(t)$ and $H_{v}(t)$. Since $H(\cdot)$ is
convex with respect to $(x$,$y$,$z$,$v)$, we get%
\[
H(t,u)-H(t)-H_{x}(t)\tilde{X}_{t}-H_{y}(t)\tilde{Y}_{t}-H_{z}(t)\tilde{Z}%
_{t}\geq H_{v}(t)(u_{t}-u_{t}^{\ast}).
\]
Thus we obtain%
\begin{equation}
\tilde{Y}_{0}\geq E_{P_{n}}\left[  \int_{0}^{T}(b_{v}(t)p_{t}^{n}+\gamma
_{t}\sigma_{v}(t)q_{t}^{n}-f_{v}(t)l_{t}^{n})(u_{t}-u_{t}^{\ast})dt+\int
_{0}^{T}l_{t}^{n}dK_{t}^{\ast}\right]  \label{e3-6}%
\end{equation}
by (\ref{e3-4}) and (\ref{e3-3}). Noting that
\[
E_{P_{n}}\left[  \left \vert \int_{0}^{T}l_{t}^{n}dK_{t}^{\ast}\right \vert
\right]  \leq E_{P_{n}}\left[  |K_{T}^{\ast}|\sup_{t\leq T}l_{t}^{n}\right]
\leq \left(  E_{P_{n}}\left[  \sup_{t\leq T}|l_{t}^{n}|^{2}\right]  E_{P_{n}%
}\left[  |K_{T}^{\ast}|^{2}\right]  \right)  ^{1/2}\leq C\left(  E_{P_{n}%
}\left[  |K_{T}^{\ast}|^{2}\right]  \right)  ^{1/2}%
\]
and $E_{P_{n}}\left[  |K_{T}^{\ast}|^{2}\right]  \rightarrow E_{P_{\ast}%
}\left[  |K_{T}^{\ast}|^{2}\right]  =0$ as $n\rightarrow \infty$ by Lemma 29 in
\cite{DHP11}, we deduce%
\begin{equation}
\lim_{n\rightarrow \infty}E_{P_{n}}\left[  \int_{0}^{T}l_{t}^{n}dK_{t}^{\ast
}\right]  =0. \label{e3-7}%
\end{equation}
Thus we obtain $\tilde{Y}_{0}=Y_{0}^{u}-Y_{0}^{\ast}\geq0$ by (\ref{e3-5}),
(\ref{e3-6}) and (\ref{e3-7}). Due to the arbitrariness of $u\in
\mathcal{U}[0,T]$, we obtain that $u^{\ast}$ is an optimal control.
\end{proof}

\begin{corollary}
\label{co3-2}Suppose assumptions (H1)-(H4) hold. Let $u^{\ast}\in
\mathcal{U}[0,T]$ be given and let $(X^{\ast}$,$Y^{\ast}$,$Z^{\ast}$,$K^{\ast
})$ be the solution of (\ref{e1-1}) corresponding to $u^{\ast}$. Additionally,
suppose that $H(\cdot)$ defined in (\ref{e3-1}) is convex with respect to
$(x$,$y$,$z$,$v)$, $\phi$ is convex with respect to $x$ and there exists a
$P_{\ast}\in \mathcal{P}^{\ast}$ such that
\begin{equation}
(b_{v}(t)p_{t}^{P_{\ast}}+\gamma_{t}\sigma_{v}(t)q_{t}^{P_{\ast}}%
-f_{v}(t)l_{t}^{P_{\ast}})(v-u_{t}^{\ast})\geq0,\text{ }a.e.,\text{ }P_{\ast
}-a.s., \label{e3-8}%
\end{equation}
for any $v\in U$, where $\mathcal{P}^{\ast}$, $b_{v}(t)$, $\sigma_{v}(t)$ and
$f_{v}(t)$ are the same as in Theorem \ref{th2-3}, $(l^{P_{\ast}}$%
,$p^{P_{\ast}}$,$q^{P_{\ast}}$,$N^{P_{\ast}})$ is the solution of the adjoint
equation (\ref{e2-10}) under $P_{\ast}$. Then $u^{\ast}$ is an optimal control.
\end{corollary}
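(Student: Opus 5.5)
The plan is to reduce Corollary \ref{co3-2} to Theorem \ref{th3-1}: it suffices to check that the pointwise condition (\ref{e3-8}) under a single $P_{\ast}\in\mathcal{P}^{\ast}$ implies the variational inequality (\ref{e3-2}) for every $u\in\mathcal{U}[0,T]$. All remaining hypotheses of Theorem \ref{th3-1} (convexity of $H$ and $\phi$, (H1)--(H4)) are assumed verbatim. Once (\ref{e3-2}) holds, the conclusion that $u^{\ast}$ is optimal follows directly from Theorem \ref{th3-1}.

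Fix $u\in\mathcal{U}[0,T]$. Since $u_{t}\in U$ for all $t$, we may substitute $v=u_{t}(\omega)$ in (\ref{e3-8}). The one point needing care is that (\ref{e3-8}) is stated for each fixed $v\in U$, with an exceptional $dt\times dP_{\ast}$-null set depending on $v$. I would handle this as follows.
\begin{itemize}
\item[(a)] Take a countable dense subset $\{v_{k}\}\subset U$ and a common null set outside which (\ref{e3-8}) holds for all $v_{k}$.
\item[(b)] The left side of (\ref{e3-8}) is affine, hence continuous, in $v$, with coefficient $H_{v}$ given by (\ref{e3-4}). Therefore the inequality extends to all $v\in U$ off that single null set.
\end{itemize}
This gives
\[
(b_{v}(t)p_{t}^{P_{\ast}}+\gamma_{t}\sigma_{v}(t)q_{t}^{P_{\ast}}-f_{v}(t)l_{t}^{P_{\ast}})(u_{t}-u_{t}^{\ast})\geq0,\quad dt\times dP_{\ast}\text{-a.e.}
\]

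Integrating in $t$ and taking $E_{P_{\ast}}$ yields $E_{P_{\ast}}[\int_{0}^{T}H_{v}(t)(u_{t}-u_{t}^{\ast})dt]\geq0$. This expectation is well defined and finite by Cauchy--Schwarz: the derivatives are bounded by (H2), $\gamma_{t}\leq\bar{\sigma}^{2}$, the bound (\ref{e2-11}) controls $l^{P_{\ast}},p^{P_{\ast}},q^{P_{\ast}}$ in $L^{2}$, and $u-u^{\ast}\in M_{G}^{2}$.

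Finally, $P_{\ast}\in\mathcal{P}^{\ast}\subset\mathcal{P}^{\ast\alpha}$ for every $\alpha>0$, since $d(P_{\ast},\mathcal{P}^{\ast})=0$. Hence
\[
\sup_{P\in\mathcal{P}^{\ast\alpha}}E_{P}\left[\int_{0}^{T}H_{v}(t)(u_{t}-u_{t}^{\ast})dt\right]\geq E_{P_{\ast}}\left[\int_{0}^{T}H_{v}(t)(u_{t}-u_{t}^{\ast})dt\right]\geq0
\]
for every $\alpha>0$, and taking $\inf_{\alpha>0}$ gives (\ref{e3-2}).

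The only mildly delicate step is the null-set/measurability argument in (a)--(b) for replacing $v$ by the random control $u_{t}$. The rest is immediate.
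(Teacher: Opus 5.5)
Your proposal is correct and follows essentially the same route as the paper: use continuity in $v$ to upgrade (\ref{e3-8}) to all $v\in U$ off a single null set, substitute $v=u_t$, take $E_{P_\ast}$ to obtain (\ref{e3-2}) via $P_\ast\in\mathcal{P}^{\ast}\subset\mathcal{P}^{\ast\alpha}$, and invoke Theorem \ref{th3-1}. Your countable-dense-subset step and the integrability check spell out details that the paper's proof leaves implicit.
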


\begin{proof}
Since $H_{v}(t,X_{t}^{\ast},Y_{t}^{\ast},Z_{t}^{\ast},l_{t}^{P_{\ast}}%
,p_{t}^{P_{\ast}},q_{t}^{P_{\ast}},u_{t}^{\ast})(v-u_{t}^{\ast})$ is
continuous with respect to $v$, we obtain%
\[
(b_{v}(t)p_{t}^{P_{\ast}}+\gamma_{t}\sigma_{v}(t)q_{t}^{P_{\ast}}%
-f_{v}(t)l_{t}^{P_{\ast}})(v-u_{t}^{\ast})\geq0,\text{ }\forall v\in U,\text{
}a.e.,\text{ }P_{\ast}-a.s.,
\]
by (\ref{e3-8}). Then%
\[
E_{P_{\ast}}\left[  \int_{0}^{T}(b_{v}(t)p_{t}^{P_{\ast}}+\gamma_{t}\sigma
_{v}(t)q_{t}^{P_{\ast}}-f_{v}(t)l_{t}^{P_{\ast}})(u_{t}-u_{t}^{\ast
})dt\right]  \geq0
\]
for any $u\in \mathcal{U}[0,T]$, which implies (\ref{e3-2}). Therefore,
$u^{\ast}$ is an optimal control by Theorem \ref{th3-1}.
\end{proof}

Now we give an example to illustrate how to apply the above theorem to find
the optimal control.

\begin{example}
We consider the following linear control system:%
\[
\left \{
\begin{array}
[c]{rl}%
dX_{t}^{u}= & (-X_{t}^{u}+2aY_{t}^{u}+2u_{t})dt+(X_{t}^{u}+aY_{t}^{u}%
+u_{t})dB_{t},\\
dY_{t}^{u}= & [(a^{3}+a)X_{t}^{u}-Y_{t}^{u}+Z_{t}^{u}+2^{-1}au_{t}%
]dt+Z_{t}^{u}dB_{t}+dK_{t}^{u},\\
X_{0}^{u}= & 1,\text{ }Y_{T}^{u}=aX_{T}^{u},\text{ }t\in \lbrack0,T],
\end{array}
\right.
\]
where the control domain $U=[0,1]$, $T\geq2$ and $a>0$ is an undetermined
constant. Obviously, assumptions (H1), (H2) and (H4) hold. Assumption (H3)
holds by choosing a sufficiently small $a>0$. It is easy to check that
$H(\cdot)$ is convex with respect to $(x$,$y$,$z$,$v)$ and $\phi$ is convex
with respect to $x$. The adjoint equation under $P\in \mathcal{P}$ is%
\[
\left \{
\begin{array}
[c]{l}%
dl_{t}^{P}=(l_{t}^{P}+2ap_{t}^{P}+a\gamma_{t}q_{t}^{P})dt-\gamma_{t}^{-1}%
l_{t}^{P}dB_{t},\\
dp_{t}^{P}=[(a^{3}+a)l_{t}^{P}+p_{t}^{P}-\gamma_{t}q_{t}^{P}]dt+q_{t}%
^{P}dB_{t}+dN_{t}^{P},\\
l_{0}^{P}=1,\text{ }p_{T}^{P}=al_{T}^{p}.
\end{array}
\right.
\]
It is easy to verify that%
\[
p_{t}^{P}=\mu_{t}l_{t}^{P},\text{ }q_{t}^{P}=-\gamma_{t}^{-1}\mu_{t}l_{t}%
^{P},\text{ }N_{t}^{P}=0,\text{ }t\in \lbrack0,T],
\]
where $\mu_{t}=[\exp((1+2a^{2})T)+2a^{2}\exp((1+2a^{2})t)]^{-1}[-a\exp
((1+2a^{2})T)+2a(1+a^{2})\exp((1+2a^{2})t)]$ solves the following ordinary
differential equation:%
\[
\mu_{t}^{\prime}+a\mu_{t}^{2}-\mu_{t}-a^{3}-a=0,\text{ }\mu_{T}=a.
\]
Thus $b_{v}(t)p_{t}^{P}+\gamma_{t}\sigma_{v}(t)q_{t}^{P}-f_{v}(t)l_{t}%
^{P}=(\mu_{t}-2^{-1}a)l_{t}^{P}$. Taking $u_{t}^{\ast}=I_{[0,t_{0}]}(t)$,
where $\mu_{t_{0}}=2^{-1}a$. It is easy to check that%
\[
(\mu_{t}-2^{-1}a)l_{t}^{P}(u_{t}-u_{t}^{\ast})\geq0,\text{ }P-a.s.\text{, for
any }u\in \mathcal{U}[0,T],
\]
which implies (\ref{e3-2}). Thus $u_{t}^{\ast}=I_{[0,t_{0}]}(t)$ is an optimal control.
\end{example}

\end{document}